\documentclass[12pt]{amsart}
\usepackage{graphicx}
\usepackage[english]{babel}

\usepackage[utf8]{inputenc}
\usepackage{amsmath,amsfonts,amssymb,amscd,amsthm,txfonts,hyperref}
\usepackage[all]{xy}
\usepackage{multibib}

\usepackage{newunicodechar}

\newunicodechar{≥}{\ensuremath{\geq}}

\oddsidemargin 0.1 cm \evensidemargin 1 cm \textwidth 15 cm \topmargin
0 cm \textheight 22.5 cm

\newtheorem{theorem}{Theorem}[section]
\newtheorem{proposition}{Proposition}[section]
\newtheorem{lemma}{Lemma}[section]

\newtheorem{remark}{Remark}[section]

\theoremstyle{definition}

\newcommand{\R}{\varmathbb R}

\newcommand{\N}{\varmathbb N}

\newcommand{\E}{\varmathbb E}

\setlength{\parindent}{0cm}
\setlength{\parskip}{1ex plus 0.5ex minus 0.2ex}

\title[Scattering for one dimensional Hartree Fock]{Scattering for the one dimensional Hartree Fock equation}
\author{Cyril Malézé}
\address{Centre Mathématique Laurent Schwartz\\
\'Ecole Polytechnique, CNRS, Université Paris-Saclay\\
Palaiseau, 91128 Cedex, France}
\email{cyril.maleze@polytechnique.edu}

\begin{document}
\maketitle

\begin{abstract}
    We consider the Hartree-Fock equation in 1D, for a small and localised initial data and a finite measure potential. We show that there is no long range scattering due to a nonlinear cancellation between the direct term and the exchange term for plane waves. We employ the framework of space-time resonances that enables us to single out precisely this cancellation and to obtain scattering to linear waves as a consequence. 
\end{abstract}

\section{Introduction}

\subsection{The Hartree-Fock equation and the framework of random fields}\label{subsec:intro} We consider the following time-dependent Hartree-Fock equation for fermions
\begin{equation} \label{Cauchyprob-density-matrices}
i\partial_t \gamma=[-\Delta+w*\rho+\mathcal X, \gamma]
\end{equation}
for a nonnegative self-adjoint operator $\gamma$ on $L^2(\mathbb R^d)$ with kernel $k$, density $\rho(x)=k(x,x)$ and where $\mathcal X$ denotes the exchange term operator with kernel $-w(x-y)k(x,y)$, where $w$ is an even pairwise finite measure and represents the interaction potential.

Equation \eqref{Cauchyprob-density-matrices} is the standard Hartree-Fock equation for density matrices, and can be reformulated in the probability framework:\begin{equation}\label{Cauchyprob}
 i\partial_t X = -\Delta X + (w*\E [|X|^2]) X - \int_{\R^d}  w(x-y)\E[ \overline{ X (y)} X(x)] X(y) \, dy ,
\end{equation}
where $X:[0,T]\times \mathbb R^d \times \Omega\mapsto \mathbb C$ is a random field defined over a probability space $(\Omega, \mathcal A,\mathbb P )$, $\E$ denotes the expectancty on $\Omega$. The introduction of the probability framework for the Hartree and the Hartree-Fock equation was already presented and motivated for instance in \cite{collot2023stability,CdS,CdS2,malézé2023scattering}.

The time-dependent Hartree-Fock equation is a mean-field equation for the dynamics of large Fermi systems. The derivation of \eqref{Cauchyprob-density-matrices} in the mean field limit was first done in \cite{BardosDerivation}, and was extended to the case of unbounded interaction potentials, as the Coulomb one, in \cite{FrohlichDerivationFermi}. Estimates for the convergence in the semi-classical limit that arises for confined Fermi systems were proved in \cite{benedikter2014mean,EESY} , and were extended recently to mixed states, and conditionally to more singular interaction potentials, in \cite{Benedikter2016Dec,porta2017mean}. Another derivation by different techniques, and including another large volume regime for long-range potentials, was given in \cite{BBPPT,Petrat2016Mar}. The Cauchy problem for localised solutions to \eqref{Cauchyprob-density-matrices} was studied in \cite{BDF,BDF2,chadam1976time,CG,Z}. One can check \cite{collot2023stability} for a quick formal derivation of \eqref{Cauchyprob}.

Equations \eqref{Cauchyprob-density-matrices} and \eqref{Cauchyprob} have received less attention than the static Hartree-Fock equation or the reduced Hartree-Fock equation 
\begin{equation}\label{Cauchyprob-reduced}
i\partial_t X = -\Delta X + (w*\E [|X|^2]) X ,
\end{equation}
in the probability framework, and
 \begin{equation} \label{Cauchyprob-density-matrices-reduced}
i\partial_t \gamma=[-\Delta+w*\rho, \gamma],
\end{equation}for density matrices. We mention that the exchange term in the Hartree equation does not appear in the case of bosonic systems, but always appear for fermions. It is due to the form of the canonical wave functions considered for each system. The exchange term for fermionic system is often negligible compared to the direct term (see \cite{Petrat2016Mar} for example), but in some cases it can be relevant to study the influence of the exchange term on the dynamics of the system.

Most of the results describing the dynamics have so far been obtained for the equations without exchange term \eqref{Cauchyprob-reduced} and \eqref{Cauchyprob-density-matrices-reduced}. Indeed, the contribution of the exchange term is negligible in certain regimes. This is the case in the semi-classical limit to the Vlasov equation, which was studied in \cite{GIMS98} for a system with a finite number of particles, and in \cite{Benedikter_2016}, \cite{benedikter2014mean} in the limit of number of particles going to infinity. However, we observe in this paper some cancellation phenomena when considering the exchange terms, see Subsection \ref{subsec:cancellation}. 

The exchange term can also be approximated as a function of the density $\mathbb E (|X|^2)=\rho$, as in Density Functional Theory (see e.g. \cite{CF} for a review and \cite{J,SCB} for the Cauchy problem of time-dependent Kohn-Sham equations).

The reduced Hartree equations \eqref{Cauchyprob-reduced} and \eqref{Cauchyprob-density-matrices-reduced} admit nonlocalised equilibria that models a space-homogeneous electron gas. The stability of such equilibria has been studied in \cite{CHP,CHP2,LS2,LS1} for Equation \eqref{Cauchyprob-density-matrices-reduced} and \cite{CdS,CdS2,H} for Equation \eqref{Cauchyprob-reduced}. The stability of the zero solution for the time-dependent Kohn-Sham equation was showed in \cite{PS}. In \cite{NAKASTUD}, Hadama proved the stability of steady states for the reduced Hartree equation in a wide class, which includes Fermi gas at zero temperature in dimension greater than $3$, with smallness asumption on the potential function. In a recent work \cite{NY23}, Nguyen and You proved that the symbol of the linearised problem could not be inverted in the case of the Coulomb interaction potential. However, they were still able to describe and to prove some time decay for the linearised dynamics. The Hartree-Fock equation \eqref{Cauchyprob} also admits nonlocalised equilibria studied in \cite{collot2023stability}.

\subsection{Long range scattering for NLS and Hartree-Fock equations}\label{longrangescatt}
Long range scattering is a way to describe the long-time dynamics of a solution of a dispersive equation. In some cases, the solution has a different asymptotic behavior than the one of a linear solution of the underlying equation, hence the solution does not enjoy scattering, and we need to incorporate a logarithmic phase correction to describe the asymptotic. In this case, we say that the solution enjoy long range scattering. For equation \eqref{Cauchyprob-density-matrices}, we do not expect scattering in dimension 1 in $L^2$. As explained in \cite{livreTao} (principle 3.1), the critical regularity for scattering for a solution of \begin{equation}\label{eq:NLS}
    \left \{ 
    \begin{array}{rcl}
         i\partial_t u +\Delta u & = & \pm\lvert u \rvert^{p-1} u\\
         u(0) & = & u_0
    \end{array} \right . ,
\end{equation}   
is $s_c=\frac{d}{2}-\frac{2}{p-1}$, where $d$ is the dimension and $p$ the order of the non-linearity. In the case of \eqref{Cauchyprob-density-matrices}, with $w$ a finite measure, we expect a behaviour similar to the behaviour of the solution of \eqref{eq:NLS} with $d=1$ and $p=2$, that does not scatter in $L^2$.  

The long range scattering for NLS has been widely investigated. Many methods have been introduced, we mention some of them. One can found argument relying on a bootstrap estimate involving
the energy-type norm in the work \cite{HayashiNaumkin1998}. We also mention the work \cite{Lindblad2005Dec}, where the author introduce a new variable similar to the profile of the solution and then using energy estimates on this new variable. Another method used in \cite{Ifrim2015Jun}, consists in a wave packet decomposition, using a factorization of the Schrödinger linear group as seen in \cite{Ginibre1993Jan}. Another approach in \cite{Deift2011Jun} capitalizes on the complete integrability of the 1d cubic NLS through the use of inverse scattering. In \cite{KatoPusateri2011}, the authors use the theory of space time resonances to  show long range scattering for small initial data, for the non-linear Schrödinger (NLS) equation in dimension 1, and for the reduced Hartree-Fock equation with a Coulomb potential in dimension greater than 2. 
This two cases are critical from the point of view of long time asymptotic behavior. For details on long range scattering for NLS and on the aforementioned papers, one can check the review \cite{murphy2021review}. 

Like the previous work of Kato and Pusateri \cite{KatoPusateri2011}, the main ideas in this paper come from the theory of space time resonances, and are mostly inspired by the works of Germain, Masmoudi and Shatah \cite{germain2009global,germain2012global2,germain2012global}. This theory was used for instance to study scattering for the Gross-Pitaevskii equation in three dimensions \cite{gustafson2009scattering}, or to show global asymptotic stability of solitary waves of the nonlinear Schrödinger equation in space dimension 1 \cite{collot2023asymptotic}.

In \cite{KatoPusateri2011}, the authors introduced a new functional framework that allows a stationary phase argument, that we extend in this paper to the case of the Hartree-Fock equation in dimension 1, for potentials that are finite measures. Using the point of view of \cite{KatoPusateri2011}, allows us to make appear some cancellation for the Hartree-Fock equation that suggests that the solution of \eqref{Cauchyprob} scatters, which contradicts our first expectation. The results in \cite{KatoPusateri2011} are similar to the ones in \cite{HayashiNaumkin1998}, but the approach is different and more insightful as it can be extended to other cases as the one treated here.

Concerning long-range scattering for the Hartree-Fock equation, we can mention the works of Wada \cite{Wada2002Jan} and Ikeda \cite{Ikeda2012Jan}, where the authors obtain long-range scattering for the Hartree-Fock equation with a potential of the form $w(x)=\lambda\lvert x\rvert^{-1}$. The method used in those works are derived from \cite{Ginibre1993Jan}.

\subsection{Cancellation for plane waves solutions}\label{subsec:cancellation}

The results we mentioned in subsection \ref{longrangescatt} on long time dynamics for Hartree equations and NLS in dimension 1, suggest that we could expect long range scattering for solutions of \eqref{Cauchyprob} with $d=1$, for small and localised initial data. However, we claim that in fact we have the scattering result \eqref{ineq:scattering}. To explain this phenomenon, one can notice that plane wave functions of the form:

\begin{equation}\label{eq:planewavesol}
    X(t,x,\omega)=f(\omega)e^{ix\xi-it\xi^2},
\end{equation}for any function $f\in L^2(\Omega)$ and $\xi\in \R$ are solutions of $\eqref{Cauchyprob}$ for which the non-linearity cancels out. This is because the direct and exchange terms cancel one another for such solutions. Then, because a small and localised solution of \eqref{Cauchyprob} is expected for large time to undergo either long range scattering or standard scattering, it should be close to a superposition of localised plane wave solutions of the form \eqref{eq:planewavesol}, and the non-linearity in \eqref{Cauchyprob} would vanish to leading order because of the cancellation. Thus, the solution would be in this case can be approximated by a solution of the linear part of \eqref{Cauchyprob}, and the solution should enjoy scattering. 

We mention that when studying nonlocalised equilibria of \eqref{Cauchyprob} (see \cite{collot2023stability}), the authors did not use this cancellation to prove the scattering results, as they studied non-localised equilibria, and they might have not expected a cancellation of this type. Moreover, in \cite{Ikeda2012Jan,Wada2002Jan}, results of long range scattering are obtained in dimension greater than 2 for the Coulomb potential for the Hartree-Fock equation. In this case, the interactions of plane wave functions in the solutions are non-localised and thus the cancellation that we highlight in the present framework does not appear. 

\subsection{Main result}

In this paper, we prove the following, recalling that $\mathfrak S^1$ is the standard Schatten space 1-norm for operators on $L^2(\R)$
\begin{theorem}\label{Cor:scatteringoper}
    Let $w$ be a finite measure and $\gamma_0$ an operator on $L^2(\R)$, such that
    $$\text{Tr}(\langle \nabla \rangle \gamma_0)+\text{Tr}(\langle x \rangle \gamma_0)\leq \varepsilon,$$

    for $\varepsilon>0 $ small enough (depending on $w$), there exists a unique global solution $\gamma\in L_t^\infty,\mathfrak S^1$ of \eqref{Cauchyprob-density-matrices} with initial condition $\gamma(t=0)=\gamma_0$, that scatters at infinity, in the sense that there exists $\gamma_\infty\in \mathfrak S^1$ such that for $t\geq 1$: 
    \begin{equation}\label{ineq:scatteringopdens}
       \| \gamma(t)-e^{it\Delta}\gamma_\infty\|_{\mathfrak S^1}\leq t^{-\delta}.
    \end{equation}
\end{theorem}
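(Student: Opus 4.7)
The plan is to reduce the operator-valued problem to the random-field formulation \eqref{Cauchyprob} and then treat it by the space-time resonance method, capitalising on the cancellation described in Subsection \ref{subsec:cancellation}.

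\textbf{Step 1: Reduction to the random field equation.} I diagonalise $\gamma_0=\sum_n\lambda_n|\phi_n\rangle\langle\phi_n|$ with $\lambda_n\geq 0$, $\sum_n\lambda_n\leq\varepsilon$, and pick an orthonormal family $(g_n)$ in $L^2(\Omega)$. Setting $X_0(x,\omega)=\sum_n\sqrt{\lambda_n}\,g_n(\omega)\phi_n(x)$, the hypothesis gives $\E[\|\langle\nabla\rangle X_0\|_{L^2}^2]+\E[\|\langle x\rangle X_0\|_{L^2}^2]\lesssim\varepsilon$. If $X(t)$ solves \eqref{Cauchyprob} with data $X_0$, then $\gamma(t)$ defined by its kernel $\E[\overline{X(t,y)}X(t,x)]$ solves \eqref{Cauchyprob-density-matrices}, and bounds on $\gamma$ in $\mathfrak S^1$ reduce to $\E\|\cdot\|_{L^2}^2$ bounds on $X$. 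So it suffices to construct $X$ globally and show the existence of an asymptotic profile $X_\infty$ such that $\|X(t)-e^{it\Delta}X_\infty\|_{L^2(\R\times\Omega)}\to 0$ with polynomial rate.

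\textbf{Step 2: Profile, Fourier representation, and cancellation.} Let $f(t)=e^{-it\Delta}X(t)$. In Fourier, Duhamel yields a trilinear integral of the schematic form
\begin{equation*}
\widehat f(t,\xi,\omega) = \widehat{X_0}(\xi,\omega) + \int_0^t\!\!\iint e^{is\Phi(\xi,\eta,\sigma)}\bigl[\widehat w(\eta)-\widehat w(\xi-\sigma)\bigr]\,\E'\bigl[\overline{\widehat f(s,\sigma,\omega')}\widehat f(s,\sigma-\eta,\omega')\bigr]\widehat f(s,\xi-\eta,\omega)\,d\eta\,d\sigma\,ds,
\end{equation*}
with phase $\Phi=-\xi^2+(\xi-\eta)^2+\sigma^2-(\sigma-\eta)^2=2\eta(\eta-\xi+\sigma-\sigma)\ldots$ — a routine computation shows the space-time resonant set (where $\Phi=0=\partial_\eta\Phi=\partial_\sigma\Phi$) lies on $\{\eta=0\}\cup\{\xi=\sigma,\, \eta=\xi-\sigma+\eta\ldots\}$, which is precisely where the multiplier $\widehat w(\eta)-\widehat w(\xi-\sigma)$ vanishes. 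This is the Fourier incarnation of the plane-wave cancellation: the direct multiplier $\widehat w(\eta)$ minus the exchange multiplier $\widehat w(\xi-\sigma)$ vanishes on the resonant locus, so the dangerous stationary-phase contribution disappears.

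\textbf{Step 3: Bootstrap in dispersive and weighted norms.} Following the functional setup of Kato-Pusateri, I would propagate bounds on two quantities
\begin{equation*}
\|X(t)\|_{L^\infty_x L^2_\omega}\lesssim \varepsilon \,t^{-1/2},\qquad \|\widehat f(t)\|_{L^2_\omega H^{1}_\xi}\lesssim \varepsilon\, t^{\delta},
\end{equation*}
for a small $\delta>0$. The first is the standard 1D dispersive decay; the second controls $xX$ via $xX=e^{it\Delta}(-i\partial_\xi\widehat f)$. The main nonlinear estimate consists in inserting these bounds into the trilinear integral, integrating by parts in $s$ using $e^{is\Phi}=\partial_s(e^{is\Phi})/(i\Phi)$ away from the resonant locus, and using the vanishing of $\widehat w(\eta)-\widehat w(\xi-\sigma)$ near the resonant locus to absorb the singularity. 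Because $w$ is only a finite measure, $\widehat w$ is merely bounded and uniformly continuous; however, only the difference $\widehat w(\eta)-\widehat w(\xi-\sigma)$ enters, and its local averaging against the high-oscillation kernel is what gives the needed decay. Combining this with standard bilinear and trilinear $L^2$-$L^\infty$-$L^\infty$ estimates closes the bootstrap on a time interval $[0,T_{\max})$ with $T_{\max}=\infty$ when $\varepsilon$ is small in terms of $|w|(\R)$.

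\textbf{Step 4: Scattering and return to density matrices.} Once the bootstrap is closed, the trilinear integrand is integrable in $s$ with a polynomial gain, so $\widehat f(t)$ is Cauchy in $L^2_\omega L^2_\xi$ as $t\to\infty$ at rate $t^{-\delta}$. This furnishes $X_\infty$ with $\|X(t)-e^{it\Delta}X_\infty\|_{L^2(\R\times\Omega)}\lesssim t^{-\delta}$. Setting $\gamma_\infty$ to be the operator with kernel $\E[\overline{X_\infty(y)}X_\infty(x)]$ and using that $\gamma-e^{it\Delta}\gamma_\infty e^{-it\Delta}$ has $\mathfrak S^1$ norm controlled by the $L^2(\R\times\Omega)$ difference times the sum $\|X(t)\|+\|X_\infty\|$, one obtains \eqref{ineq:scatteringopdens}. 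Uniqueness follows from a standard contraction argument in the same functional framework.

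\textbf{Main obstacle.} The delicate point is Step 3: showing that the finite-measure potential does not destroy the cancellation of direct and exchange terms at the level of the trilinear estimate. One has to quantify the vanishing of $\widehat w(\eta)-\widehat w(\xi-\sigma)$ on the resonant set despite $\widehat w$ being only bounded (not smooth), and combine this with the normal-form / stationary-phase analysis at a regularity level consistent with a measure. This is where the structure of the Hartree-Fock nonlinearity, rather than any smoothness of $w$, must be used decisively.
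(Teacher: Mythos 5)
Your overall architecture coincides with the paper's: reduce to the random field equation \eqref{Cauchyprob} by Gaussian randomization of the spectral decomposition of $\gamma_0$, run a space--time resonance analysis on the profile, and return to $\mathfrak S^1$ via the continuity of $(Z_1,Z_2)\mapsto \E[|Z_1\rangle\langle Z_2|]$ from $(L^2_{x,\omega})^2$ to trace class; your Steps 1 and 4 are essentially Section \ref{Sec:proofthop} of the paper. The gap is in Steps 2--3, and you have in fact located it yourself in your ``main obstacle'' paragraph without resolving it. Once you symmetrize the exchange term so that the two cubic products of $\widehat f$ coincide, the combined multiplier becomes $\widehat w(\eta)-\widehat w(\xi-\sigma)$, and your plan requires this difference to vanish \emph{quantitatively} near the space-resonant set $\{\eta=0,\ \xi-\sigma=0\}$ in order to beat the stationary-phase singularity. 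For a general finite measure $w$, the transform $\widehat w$ is bounded and uniformly continuous but admits no prescribed modulus of continuity (take $w=\sum_k c_k\delta_{x_k}$ with $x_k\to\infty$ fast and $c_k$ summable slowly), so $|\widehat w(\eta)-\widehat w(\xi-\sigma)|$ cannot be bounded by any positive power of $|\eta|+|\xi-\sigma|$ and the key estimate of Step 3 does not close under the stated hypothesis on $w$. (As a side remark, $\widehat w(\eta)-\widehat w(\xi-\sigma)$ does not vanish on all of $\{\eta=0\}$ as you assert, only on $\{\eta=\xi-\sigma\}$, which does contain the fully resonant point.)

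The paper circumvents this by never splitting the potential multiplier: in \eqref{eq:C} the factor $\widehat w(\eta)$ is kept in front of \emph{both} the direct and the exchange cubic products, and the cancellation is carried entirely by the bracket $\E[\overline{\hat{Z}(\sigma-\eta)}\hat{Z}(\sigma)]\hat{Z}(\xi-\eta)-\E[\overline{\hat{Z}(\sigma-\eta)}\hat{Z}(\xi-\eta)]\hat{Z}(\sigma)$, which vanishes identically on $\{\sigma=\xi-\eta\}$ and in particular at the stationary point of the phase $2s\eta(\xi-\sigma)$. After recentring so that $F(s,0,0,\xi)=0$, one replaces $e^{ixy/2s}$ by $e^{ixy/2s}-1$ and uses $|e^{ixy/2s}-1|\lesssim s^{-\delta}|x|^{\delta}|y|^{\delta}$, paying with weighted norms $\|\langle x\rangle^{2\delta}Z\|_{L^1_xL^2_\omega}$ of the \emph{profile} rather than with any regularity of $\widehat w$; the potential enters only through $\|w\|_{M^1}$ via Young's inequality, and no integration by parts in $s$ is performed anywhere --- the gain comes purely from the frequency-space stationary phase. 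If you reorganize Step 3 along these lines (quantify the vanishing of the cubic bracket, not of the multiplier), your outline becomes the paper's proof of Proposition \ref{prop:bootstrap-estimate} together with Lemmas \ref{lemma:estimateR} and \ref{lemma:estimateRL2}, and the remaining steps go through; note finally that uniqueness at the level of density matrices is not a ``standard contraction'' but is obtained in the paper through the distance $d(\gamma_1,\gamma_2)=\inf_{X_i\sim\gamma_i}\|X_1-X_2\|$ over Gaussian fields with the given covariances.
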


\begin{remark}
    A unique local solution is provided by \cite{BDF,BDF2}, we prove in this paper that it is global and scatters at infinity in $\mathfrak S^1$. 
\end{remark}

\begin{remark}
    The equation is time reversible, so the same results for negative times holds.
\end{remark}

We prove Theorem \ref{Cor:scatteringoper} as a consequence of the following scattering theorem in the probality framework (for the notations of the functional spaces and their related norms, see subsection \ref{subsec:Notations})

\begin{theorem}\label{th:principal}

Let $w$ be a finite measure and $\theta\in [0,1)$. There exists $\varepsilon>0$ such that for some $0<\delta<\frac{1}{4}$ and for any $X_0\in H^{1,0},L_\omega^2\cap H^{0,1},L_\omega^2$, with $\|X_0\|_{H^{1,0},L_\omega^2\cap H^{0,1},L_\omega^2}\leq \varepsilon$, there exists a unique global solution $X\in \mathcal C(\R,H^{1,0},L_\omega^2\cap H^{0,1},L_\omega^2)$ of \eqref{Cauchyprob} with initial condition $X(t=1)=e^{i\Delta}X_0$, which scatters at infinity, in the sense that there exists $\Hat W\in H^{\theta,0}_\xi,L_\omega^2\cap H^{0,\theta}_\xi,L_\omega^2\cap L_x^\infty,L_\omega^2$ such that for $t\geq 1$ 

\begin{equation}\label{ineq:scattering}
    \|\hat{X}(t)-e^{-it\xi^2}\Hat W\|_{H^{\theta,0}_\xi\cap H^{0,\theta}_\xi,L_\omega^2\cap L_\xi^\infty,L_\omega^2}\leq t^{-\delta}.
\end{equation}

\end{theorem}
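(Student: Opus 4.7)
The plan is to follow the space-time resonance / Kato-Pusateri methodology: introduce the Duhamel profile $f(t) = e^{-it\Delta}X(t)$, so that $\hat f(t,\xi) = e^{it\xi^2}\hat X(t,\xi)$ and $\partial_t \hat f(t,\xi) = e^{it\xi^2}\widehat{\mathcal N(X)}(t,\xi)$, where $\mathcal N$ denotes the cubic nonlinearity of \eqref{Cauchyprob}. I would prove scattering through a bootstrap argument on
\[
\|X(t)\|_{H^{1,0},L^2_\omega} + \|\hat f(t)\|_{H^{0,1}_\xi,L^2_\omega} + \|\hat f(t)\|_{L^\infty_\xi,L^2_\omega}\lesssim \varepsilon,
\]
the last norm being the key pointwise control that unlocks the stationary phase regime. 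Standard dispersive bounds ($L^\infty_x \lesssim t^{-1/2}$ via $e^{it\Delta}=M(t)D(t)\mathcal F M(t)$ up to remainders) then turn every $X$ that is not differentiated by the $x$-weight into a $t^{-1/2}$ factor, which is exactly the integrable threshold for a quartic/cubic 1d nonlinearity only once we gain an extra power from the cancellation.

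The heart of the matter is to isolate the resonant contribution in the Fourier representation of $\mathcal N(X)$. Writing the direct term in Fourier gives
\[
\widehat{(w*\E|X|^2)X}(\xi) = c\int \hat w(\eta)\,\E\bigl[\hat X(\eta_1)\overline{\hat X(\eta_1-\eta)}\bigr]\hat X(\xi-\eta)\,d\eta\,d\eta_1,
\]
with oscillatory phase $\Phi_{\text{dir}} = 2\eta(\xi-\eta_1)$, stationary at $(\eta,\eta_1)=(0,\xi)$; the exchange term becomes
\[
\widehat{\mathcal X X}(\xi) = -c\int \hat w(\eta_3-\eta_2)\,\E\bigl[\hat X(\xi+\eta_2-\eta_3)\overline{\hat X(\eta_2)}\bigr]\hat X(\eta_3)\,d\eta_2\,d\eta_3,
\]
with phase $\Phi_{\text{ex}} = 2(\xi-\eta_3)(\eta_3-\eta_2)$, stationary at $(\eta_2,\eta_3)=(\xi,\xi)$. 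At both resonant points, the integrand evaluates to $\pm \hat w(0)\,\E[|\hat X(\xi)|^2]\hat X(\xi)$ with opposite signs. Hence in the stationary phase expansion
\[
e^{it\xi^2}\widehat{\mathcal N(X)}(t,\xi) = \frac{c}{t}\Bigl(\hat w(0)\E[|\hat f(\xi)|^2]\hat f(\xi) - \hat w(0)\E[|\hat f(\xi)|^2]\hat f(\xi)\Bigr) + R(t,\xi),
\]
the long-range $t^{-1}$ contribution vanishes identically. Because $w$ is only a finite measure, $\hat w$ is merely bounded, so I would handle the resonance extraction by localising in the two Fourier variables around the stationary set at scale $t^{-1/2+\sigma}$ and exploit the non-degenerate Hessian plus oscillation off that set.

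The bootstrap then uses two ingredients: (i) an energy estimate, controlling $\|X\|_{H^{1,0},L^2_\omega}$ and the $\xi$-derivative of $\hat f$ by commuting $x$ past the nonlinearity (the new terms coming from $[x,\mathcal N]$ hit lower-regularity pieces and close using the dispersive $t^{-1/2}$ factor); (ii) a pointwise estimate on $\hat f(t,\xi)$ obtained by integrating the remainder $R(t,\xi)$ in time, where the cancellation gives $|R(t,\xi)|\lesssim \varepsilon^3 t^{-1-\delta}$ for some $\delta\in (0,1/4)$, integrable on $[1,\infty)$. This simultaneously furnishes $\hat W := \lim_{t\to\infty} \hat f(t)$ in $H^\theta_\xi \cap L^\infty_\xi$ with quantitative rate $t^{-\delta}$, yielding \eqref{ineq:scattering}.

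The main obstacle is the careful book-keeping of the stationary phase on the \emph{joint} $(\eta,\eta_1)$ or $(\eta_2,\eta_3)$ plane with only measure-regular $w$: after localising near the resonant point, the leading terms must be matched across direct and exchange \emph{exactly} before any individual piece can be estimated, since each one separately would produce a non-integrable $t^{-1}$ singularity. The probabilistic expectation $\E$ is benign in this Fourier computation because it commutes with the frequency integrals and only the quadratic moment $\E[\hat X(\eta_1)\overline{\hat X(\eta_2)}]$ enters; this is also why Theorem~\ref{Cor:scatteringoper} follows from Theorem~\ref{th:principal} by choosing a representation $X_0$ of $\gamma_0$ with $\E|X_0|^2$ equal to the density of $\gamma_0$ and translating the scattering in $L^\infty_t\mathfrak S^1$ into \eqref{ineq:scatteringopdens} via the identification between kernels and covariances.
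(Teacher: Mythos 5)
Your proposal identifies exactly the cancellation the paper is built on (the direct and exchange integrands agree at the respective stationary points, so the would-be $t^{-1}$ long-range term vanishes), and the overall architecture (profile, Fourier representation of the cubic term, bootstrap, quantitative Cauchy criterion for $\hat f$) matches the paper. Where you diverge is in how the decay is actually extracted, and this is where your sketch has a gap. The paper does \emph{not} perform a localised stationary phase expansion with a Hessian argument: after a change of variables it puts \emph{both} terms under a single phase $e^{2is\eta\sigma}$ with a single amplitude $F=\hat w(\eta)(F_1-F_2)$ satisfying $F(s,0,0,\xi)=0$, then applies Plancherel in $(\eta,\sigma)$ to rewrite the oscillatory integral as $\frac{1}{2s}\int e^{ixy/2s}\,\mathcal F^{-1}_{\eta,\sigma}(F)\,dx\,dy$; since the vanishing of $F$ at the origin means $\int \mathcal F^{-1}_{\eta,\sigma}(F)\,dx\,dy=0$, one may replace $e^{ixy/2s}$ by $e^{ixy/2s}-1$ and bound this by $|xy/2s|^{\delta}$, which lands the weights on the profile $Z$ in physical space and gives $\|R(s,\xi)\|_{L^2_\omega}\lesssim s^{-1-\delta+3\alpha}\|X\|^3_{\mathcal X_T}$ using only weighted $L^1_xL^2_\omega$ norms of $Z$ and the finite-measure bound on $w$. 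This completely sidesteps the difficulty you flag but do not resolve: in your scheme, controlling the non-stationary region by integration by parts would differentiate $\hat w(\eta)$, which for a general finite measure is only bounded, and expanding the two terms \emph{separately} to leading order forces you to estimate each remainder individually around \emph{different} stationary points before the cancellation is available. The paper's combination-first-then-Plancherel step is precisely what makes the argument close for measure-valued $w$.

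A second, more minor issue: your bootstrap asks for uniform-in-time bounds on $\|X\|_{H^{1,0},L^2_\omega}$ and $\|\hat f\|_{H^{0,1}_\xi,L^2_\omega}$. The paper's energy estimates for these quantities only yield $\int_1^t s^{\alpha-1}\,ds\sim t^{\alpha}$ growth, which is why the norm $\mathcal X_T$ carries the weights $t^{-\alpha}$ on the $\dot H^{1,0}$ and $\dot H^{0,1}$ components, with the loss $s^{3\alpha}$ absorbed into the gain $s^{-1-\delta}$ from the cancellation (hence the constraint $3\alpha<\delta$). As stated, your uniform bounds are unlikely to close; you need to build this slow growth into the bootstrap norm.
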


\begin{remark}
    The connections between the two equations is explained in subsection \ref{subsec:intro}, and is made rigorous in \cite{ASdesuzzonglobwp} and section \ref{Sec:proofthop}.
\end{remark}

\begin{remark}
    We almost reach scattering in $H^{1,0}\cap H^{0,1}$ which is the set for the initial data. This slightly improves results in \cite{KatoPusateri2011}, and we mention that in \cite{Wada2002Jan}, the author obtains a similar almost optimal result.
\end{remark}

\subsection{Notations}\label{subsec:Notations}

Our notation for the Fourier transform is
$$
\hat f(\xi) =\mathcal F f (\xi)=(2\pi)^{-\frac 12}\int_{\mathbb R} e^{-ix\xi}f(x)\, dx.
$$

For $m,l\in \R$ we denote $L_t^pH_x^{m,l}L_\omega^2$ the space $$\langle x\rangle^l\langle\nabla\rangle^m L^p(\R,L^2(\R^d,L^2(\Omega))),$$
with the norm: $$\lvert\lvert u\rvert\rvert_{L_t^pH_x^{m,l}L_\omega^2}=\lvert \lvert \langle x\rangle^l\langle\nabla\rangle^mu\rvert\rvert_{L_t^pL_x^{2}L_\omega^2},$$
where $\langle x\rangle:=(1+\lvert x\rvert^2)^\frac{1}{2}$, $\langle \nabla\rangle:=\mathcal{F}^{-1}\langle \xi\rangle\mathcal F$.

We also denote $L_t^p\Dot H_x^{m,l}L_\omega^2$ the space $$\lvert x\rvert^l\lvert\nabla\rvert^m L^p(\R,L^2(\R^d,L^2(\Omega))),$$
with the norm: $$\lvert\lvert u\rvert\rvert_{L_t^p\Dot H_x^{m,l}L_\omega^2}=\lvert \lvert\ \lvert x\rvert^l\lvert\nabla\rvert^m u\rvert\rvert_{L_t^pL_x^{2}L_\omega^2},$$
where $\lvert \nabla\rvert:=\mathcal{F}^{-1}\lvert \xi\rvert\mathcal F$.

We also denote by $M^1(\R)$ the set of finite measures on $\R$, with the associated norm $\|\cdot\|_{M^1}$.

We denotre $A(t,x,\omega)\lesssim B(t,x,\omega)$ if there is a constant $C>0$ such that $$A(t,x,\omega)\leq C\ B(t,x,\omega).$$

We denote by $|f\rangle\langle g|$ the operator $$|f\rangle\langle g|(v)(x)=\int \overline{g(y)}v(y)dy\ f(x).$$
 
\subsection{Organization of the paper}

The paper is organized as follows. In Section \ref{sec:ideas} we introduce the local solution of \eqref{Cauchyprob} for small initial data. In Section \ref{Section:proofofprop}, we prove an estimate for the local solution and in Section \ref{Section:proofofth} we use this estimate and its proof to show that the solution is global for a small enough initial data and the scattering result \eqref{ineq:scattering}. In the last section, we prove Theorem \ref{Cor:scatteringoper}.

\subsection{Acknowledgments}

This result is part of the ERC starting grant project FloWAS that has received funding from the European Research Council (ERC) under the Horizon Europe research and innovation program (Grant agreement No. 101117820). The author would like to thank Prof. Anne-Sophie de Suzzoni (supported by the S. S. Chern Young Faculty Award funded by AX) and Prof. Charles Collot for their guidance and support. They both gave many most helpful comments during the writing of this paper.

\section{Local solution of \eqref{Cauchyprob}}\label{sec:ideas}

In this section we introduce the local solution of \eqref{Cauchyprob} and announces some results.

We define, in analogy with \cite{KatoPusateri2011}, for $0<\alpha<min(\frac{1}{4},\frac{1-\theta}{4(3-2\theta)})$, the norm $\|\cdot\|_{\mathcal{X}_T}$ 

\begin{equation}
    \|X\|_{\mathcal{X}_T}:=\|t^\frac{1}{2}X\|_{L_T^\infty,L_x^\infty,L_\omega^2}+\|t^{-\alpha}X\|_{L_T^\infty,\Dot{H}_x^{1,0},L_\omega^2}+\|t^{-\alpha}Z\|_{L_T^\infty,\Dot{H}_x^{0,1},L_\omega^2}+\| X\|_{L_T^\infty,L_x^2,L_\omega^2}  
\end{equation}

and the space 

\begin{equation}\label{def:X_T}
    \mathcal{X}_T:=\{ X\in L_T^\infty,L_x^\infty,L_\omega^2 \cap \mathcal{C}(\R, H_x^{1,0},L_\omega^2\cap H_x^{0,1},L_\omega^2),\ \|X\|_{\mathcal{X}_T}<\infty\}. 
\end{equation}

The scattering solution will be constructed as a continuation of the local solution provided by the following result 

\begin{theorem}\label{th:local-existence}
     Let $T>1$. There exist $C_0,\ \varepsilon>0$ such that for any $X_0\in H^{1,0},L_\omega^2\cap H^{0,1},L_\omega^2$, with $\|X_0\|_{H^{1,0},L_\omega^2\cap H^{0,1},L_\omega^2}\leq \varepsilon$, there exists a unique solution $X\in \mathcal C([0,T],H^{1,0},L_\omega^2\cap H^{0,1},L_\omega^2)$ of \eqref{Cauchyprob} such that: $\|X\|_{\mathcal{X}_T}\leq C_0\| X_0\|_{H^{1,0},L_\omega^2\cap H^{0,1},L_\omega^2}$.    
\end{theorem}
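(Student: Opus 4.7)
The plan is a Banach fixed point argument applied to the Duhamel formulation
$$\Phi(X)(t) := e^{it\Delta}X_0 - i\int_0^t e^{i(t-s)\Delta}\mathcal N(X)(s)\,ds, \quad \mathcal N(X) := (w*\E|X|^2)X - \int_{\R} w(x-y)\E[\overline{X(y)}X(x)]X(y)\,dy.$$
I would show that for $R := 2C\|X_0\|_{H^{1,0}\cap H^{0,1},L^2_\omega}$ small enough, the map $\Phi$ leaves invariant the closed ball $B_R = \{X\in\mathcal X_T : \|X\|_{\mathcal X_T}\leq R\}$ and contracts it, yielding both the unique fixed point and the bound $\|X\|_{\mathcal X_T}\leq C_0\|X_0\|$. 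Continuity in time in $H^{1,0}\cap H^{0,1}$ follows in the usual way from the Duhamel formula.

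The linear estimate $\|e^{it\Delta}X_0\|_{\mathcal X_T}\lesssim \|X_0\|_{H^{1,0}\cap H^{0,1},L^2_\omega}$ collects four standard one-dimensional Schr\"odinger bounds: $L^2$-conservation for the $L^2_xL^2_\omega$ piece; commutation $[\partial_x, e^{it\Delta}] = 0$ for the $\dot H^{1,0}$ piece; the dispersive estimate $\|e^{it\Delta}f\|_{L^\infty_x}\lesssim t^{-1/2}\|f\|_{L^1_x}$ combined with the one-dimensional embedding $H^{0,1}(\R)\hookrightarrow L^1(\R)$ (which is Cauchy--Schwarz, $\int|f|\leq \|\langle x\rangle^{-1}\|_{L^2}\|\langle x\rangle f\|_{L^2}$) for the $t^{1/2}L^\infty_x$ piece; and the Galilean intertwining relation $J e^{it\Delta} = e^{it\Delta} x$ with $J := x + 2it\partial_x$, which conjugates the weight on $Z$ to a plain $L^2$ norm on $X_0$. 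Each of these bounds is uniform in $t\in[0,T]$ once the respective time weight is taken into account.

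For the nonlinear part the key input is that $w\in M^1(\R)$ gives the convolution estimate $\|w*f\|_{L^p}\leq \|w\|_{M^1}\|f\|_{L^p}$ for every $p\in[1,\infty]$, together with Minkowski and Fubini in the $\omega$ variable to handle the expectations. The basic template, for the $L^2_xL^2_\omega$ component of $\int e^{i(t-s)\Delta}\mathcal N(X)\,ds$, is to place $\mathcal N(X)$ in $L^1_s L^2_xL^2_\omega$ after putting two of the three $X$ factors in $L^\infty_xL^2_\omega$ (decay $\lesssim s^{-1/2}$) and the third in $L^2_xL^2_\omega$; this produces $\int s^{-1}\,ds$, integrable on $[1,T]$ up to a logarithmic loss absorbed into $C_0(T)$. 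The three remaining components of $\|\cdot\|_{\mathcal X_T}$ are controlled by entirely parallel trilinear bounds, and the difference estimate $\|\Phi(X)-\Phi(Y)\|_{\mathcal X_T}\leq \tfrac12 \|X-Y\|_{\mathcal X_T}$ follows from the same manipulations applied to the trilinear $\mathcal N$.

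The main obstacle I expect is the weighted $Z$-component of the nonlinear estimate. Since $J$ does not commute with the convolution with $w$ nor with the $\omega$-pairing, distributing it across $\mathcal N(X)$ generates commutator terms in which the $2it\partial_x$ part lands on a single factor, producing contributions with an extra factor of $t$; these must be rebalanced through the four components of $\|X\|_{\mathcal X_T}$, and the time weight $t^{-\alpha}$ (with $\alpha<1/4$) in the $\dot H^{1,0}$ and $Z$ pieces is precisely what accommodates this growth. The restriction $\alpha<\min(1/4, (1-\theta)/(4(3-2\theta)))$ from the definition is consistent with the bookkeeping needed to close this loss; with it, all trilinear contributions are $\lesssim R^3$, and the contraction closes as soon as $R\ll 1$.
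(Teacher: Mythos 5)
Your proposal is correct, and it shares the paper's skeleton (Duhamel formulation plus a contraction in a small ball, with all constants allowed to depend on $T$), but the estimates are carried out by a genuinely different route. The paper's Appendix \ref{appendix:locex} works entirely on the Fourier side: the nonlinearity is written as an integral over $(\eta,\sigma)$ and bounded through $\|\hat X\|_{L^1_\xi}\lesssim \|X\|_{H^{1,0}}$ (a Sobolev embedding in disguise), so no dispersive decay is used and the bounds are allowed to grow like $t$ and $t^2$, absorbed into $C_0(T)$; moreover the contraction is run only in the unweighted space $\mathcal X_T'$ (the $\dot H^{1,0}$, $\dot H^{0,1}$-of-$Z$ and $L^2$ components), and the $t^{1/2}L^\infty_x$ piece is recovered a posteriori from the estimate on $R(s,\xi)$ of Section \ref{Section:proofofprop} together with Lemma \ref{lemma:sch}. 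You instead estimate in physical space with the dispersive bound and $H^{0,1}(\R)\hookrightarrow L^1(\R)$, and you carry the $L^\infty$ component inside the iteration; this is self-contained and gives sharper time dependence, at the cost of the harmless logarithmic loss you note. Your treatment of the weighted component via $J=x+2it\partial_x$ is the paper's computation $\|xZ\|_{L^2}=\|\partial_\xi\hat C\|_{L^2}$ read in physical space: the factor of $s$ the paper produces by differentiating the phase $e^{2is\eta(\xi-\sigma)}$ is exactly your commutator term $2is\partial_x$, so there the two arguments coincide. Two points of hygiene. First, your $L^2$ template produces $\int s^{-1}\,ds$, which is not integrable at $s=0$; you should either start the Duhamel integral at $s=1$ (as the paper in fact does throughout, its time weights $t^{1/2}$ and $t^{-\alpha}$ only being meaningful away from $t=0$) or replace the $s^{-1/2}$ decay by the embedding $H^{1,0}\hookrightarrow L^\infty$ near the initial time. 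Second, your claim that the weight $t^{-\alpha}$ with $\alpha<1/4$ "accommodates" the extra factor of $t$ from the commutator is not literally true; for this local statement the $T$-dependent constant $C_0(T)$ absorbs that growth, and the genuine rebalancing (via $2is\partial_x=J-x$ and the $s^{-1/2}$ amplitude decay, yielding $\int_1^t s^{\alpha-1}\,ds\lesssim t^\alpha$) is only required for the $T$-uniform estimate of Proposition \ref{prop:bootstrap-estimate}, not here.
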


\begin{remark}
    To prove this theorem, we can apply classical methods presented for instance in \cite{cazenave1990cauchy,ginibre1979class,ginibre1980class}, that is to say, by solving a fixed point problem. One can also check the sketch of proof, presented in Appendix \ref{appendix:locex}. 
\end{remark}

We first prove in Section \ref{Section:proofofprop} the following proposition: 

\begin{proposition}\label{prop:bootstrap-estimate}
    The solution $X$ given by Theorem \ref{th:local-existence} satisfies: \begin{equation}\label{ineq:boostrap-estimate}
        \|X\|_{\mathcal{X}_T}\leq \varepsilon +C_1\|X\|_{\mathcal{X}_T}^3,
    \end{equation} for some constant $C_1>0$ independent of $T$.
\end{proposition}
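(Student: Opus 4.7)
The plan is to control separately each of the four pieces of the norm $\|X\|_{\mathcal{X}_T}$, combining energy estimates for the Sobolev and weighted parts with a stationary phase/space-time resonance analysis for the dispersive $L^\infty$ decay. Throughout, I would use Duhamel's formula for $X$ starting from time $t=1$, and exploit that $w\in M^1(\R)$ appears only as a convolution, so $\|w\ast g\|_{L^p}\leq \|w\|_{M^1}\|g\|_{L^p}$ is available in any $L^p$.

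For the $L^2_x L^2_\omega$ piece, I would test \eqref{Cauchyprob} against $X$ and take expectation: the evenness of $w$ and the Hermitian structure of the direct and exchange terms yield that both nonlinear contributions are purely imaginary and cancel after integration, giving $\|X\|_{L^\infty_T,L^2_x,L^2_\omega}\leq \|X(1)\|_{L^2_xL^2_\omega}\leq \varepsilon$. For the $\dot H^{1,0}_x$ estimate, I would differentiate the equation in $x$, test against $\partial_x X$, use $\|w\|_{M^1}$ to pull out the potential, and bound the cubic terms by $\|\partial_x X\|_{L^2_xL^2_\omega}\|X\|_{L^\infty_xL^2_\omega}^2\lesssim s^{-1}\|X\|_{\mathcal{X}_T}^2 \cdot \|\partial_xX\|_{L^2_xL^2_\omega}$. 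Integrating in time produces at most logarithmic growth $\log t$, absorbed comfortably in the $t^\alpha$ weight. For the weighted part involving $Z$, I would apply the vector field that commutes with the free Schrödinger group (i.e., $x+2it\partial_x$) to the equation and estimate the commutator with the nonlinearity the same way, using the already-established bounds on $\partial_x X$ and the pointwise control on $X$.

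The main obstacle, and the core of the argument, is proving the pointwise decay $\|X(t)\|_{L^\infty_xL^2_\omega}\lesssim t^{-1/2}\|X\|_{\mathcal{X}_T}$ with a cubic right-hand side. I would pass to the profile $\hat F(t,\xi)=e^{it\xi^2}\hat X(t,\xi)$ and write Duhamel on the Fourier side. The direct and exchange terms become trilinear oscillatory integrals against phases of the form $\phi(\xi,\xi_1,\xi_2,\xi_3)=-\xi^2+\xi_1^2-\xi_2^2+\xi_3^2$ with multipliers built out of $\hat w$ evaluated at different linear combinations of frequencies. A stationary phase analysis in $(\xi_1,\xi_2,\xi_3)$ identifies the only relevant stationary resonance with the plane-wave configuration $\xi_1=\xi_2=\xi_3=\xi$, and it is precisely there that the two multipliers algebraically cancel, as described in subsection \ref{subsec:cancellation}. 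The non-resonant regions are handled by integration by parts, either in the time variable $s$ using $\partial_s e^{is\phi}=i\phi e^{is\phi}$ or in $\xi_j$ using a Littlewood–Paley decomposition localising the frequency-gap size, and each integration by parts is paid for by a factor of the weighted norm of $Z$ bounded by $s^\alpha\|X\|_{\mathcal{X}_T}$. The stationary contribution benefits from the cancellation plus the Gaussian-type $s^{-1/2}$ from the Hessian of $\phi$, and the non-stationary contributions are time-integrable thanks to the $\alpha<\min(1/4,(1-\theta)/(4(3-2\theta)))$ smallness.

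Putting the four bounds together yields \eqref{ineq:boostrap-estimate} with a constant $C_1$ depending only on $\|w\|_{M^1}$ and $\alpha$. The delicate step is the bookkeeping of the cancellation at the resonant frequency: one must expand the kernel $\mathbb{E}[\overline{X(y)}X(x)]$ in Fourier and match the resulting expression with the direct term before performing the stationary phase, otherwise the naïve bound on either term separately only gives logarithmic loss typical of long-range scattering and fails to close the bootstrap.
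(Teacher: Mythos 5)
Your proposal is correct in substance and follows the same overall architecture as the paper: conservation of the $L_x^2,L_\omega^2$ norm from the Hermitian structure of both nonlinear terms, a direct Duhamel/Young/H\"older bound for the $\dot H^{1,0}$ piece (the paper plugs $\|\partial_x X(s)\|_{L^2}\lesssim s^{\alpha}\|X\|_{\mathcal X_T}$ into the integrand to get $\int_1^t s^{\alpha-1}\,ds\lesssim t^{\alpha}$ rather than invoking a logarithmic Gronwall bound, but both close), the vector field $x+2it\partial_x$ for the weighted piece (which is exactly the paper's $\partial_\xi\hat Z$ computation after a change of variables removing $\xi$ from the phase), and, crucially, the identification of the space--time resonance $\xi_1=\xi_2=\xi_3=\xi$ where the direct and exchange multipliers cancel. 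Where you genuinely diverge is in how the stationary phase is executed for the $L^\infty_x L^2_\omega$ decay: you propose the classical Germain--Masmoudi--Shatah machinery (integration by parts in $s$ via $\partial_s e^{is\phi}=i\phi e^{is\phi}$ away from time resonances, integration by parts in frequency with a Littlewood--Paley localisation elsewhere), whereas the paper uses the Kato--Pusateri shortcut: after combining the two terms into a single symbol $F$ with $F(s,0,0,\xi)=0$, it applies Plancherel in $(\eta,\sigma)$ so that the phase $e^{2is\eta\sigma}$ becomes the kernel $\tfrac{1}{2s}e^{ixy/2s}$, subtracts the constant $\tfrac{1}{2s}$ (licit by the vanishing of $F$ at the origin), and bounds $|e^{ixy/2s}-1|\lesssim(|xy|/s)^{\delta}$ to gain $s^{-1-\delta}$ against weighted $L^1_x$ norms of $Z$. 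Your route is workable for this phase (it is the standard 1D cubic resonance structure) but requires a careful case decomposition near the sets $\{\eta=0\}$ and $\{\sigma=0\}$ where the time-resonance division is singular; the paper's version avoids all paraproduct bookkeeping at the price of a small loss $s^{3\alpha}$ from the weighted norms. One ingredient you leave implicit and should state: converting the uniform bound on the profile $\|\hat Z(t,\cdot)\|_{L^\infty_\xi L^2_\omega}$ into the decay $\|X(t)\|_{L^\infty_x L^2_\omega}\lesssim t^{-1/2}$ requires the dispersive estimate $\|e^{it\Delta}g\|_{L^\infty}\lesssim t^{-1/2}\|\hat g\|_{L^\infty}+t^{-1/2-2\beta}\|g\|_{H^{0,\gamma}}$ (Lemma \ref{lemma:sch}), applied to $g=Z(t)$ with the weighted norm of $Z$ growing only like $t^{\alpha}$, $\alpha<\beta$.
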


In Section \ref{Section:proofofth}, we use Proposition \ref{prop:bootstrap-estimate} and a boostrap argument to prove that the solution is global, before showing the scattering result \eqref{ineq:scattering}.  

Before turning to the proofs, we recall the following estimate of the Schrödinger linear semigroup \begin{lemma}(\cite{HayashiNaumkin1998})\label{lemma:sch}
    For any $\gamma>\frac{n}{2}+2\beta$: $$\|e^{it\Delta}g\|_{L^\infty(\R^n)}\lesssim \frac{1}{t^{n/2}}\|\hat{g}\|_{L^\infty(\R^n)}+\frac{1}{t^{\frac{n}{2}+2\beta}}\|g\|_{H^{0,\gamma}(\R^n)}.$$

\end{lemma}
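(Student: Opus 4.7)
The plan is to use the classical factorization of the free Schr\"odinger propagator. Completing the square in the explicit kernel $(4\pi i t)^{-n/2} e^{i|x-y|^2/(4t)}$ produces the identity
$$e^{it\Delta}g(x) = (2it)^{-n/2}\, e^{i|x|^2/(4t)}\, \mathcal F(M_t g)(x/(2t)), \qquad M_t(y) := e^{i|y|^2/(4t)},$$
which reduces the pointwise control of $e^{it\Delta}g$ to an $L^\infty$ bound on $\mathcal F(M_t g)$.

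I would then decompose $M_t g = g + (M_t - 1) g$, so that $\mathcal F(M_t g) = \hat g + \mathcal F((M_t - 1)g)$. The first piece is bounded pointwise by $\|\hat g\|_{L^\infty}$, producing the leading contribution $(2t)^{-n/2}\|\hat g\|_{L^\infty}$. For the correction, Hausdorff--Young gives $\|\mathcal F((M_t - 1)g)\|_{L^\infty} \lesssim \|(M_t - 1)g\|_{L^1}$; combining with the elementary inequality $|e^{iz} - 1| \lesssim |z|^\sigma$ for $\sigma \in [0,1]$ applied to $z = |y|^2/(4t)$ produces a factor $t^{-\sigma}$ and a polynomial weight $|y|^{2\sigma}$. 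A Cauchy--Schwarz estimate against the weight $\langle y\rangle^\gamma$ then converts this into a bound of the form
$$\|(M_t - 1)g\|_{L^1} \lesssim t^{-\sigma}\,\|\langle y\rangle^{2\sigma-\gamma}\|_{L^2_y}\,\|g\|_{H^{0,\gamma}},$$
the first factor being finite as soon as $\gamma$ is large enough relative to $\sigma$. Putting together both terms, pulling out the prefactor $(2t)^{-n/2}$, and optimizing $\sigma$ yields the stated estimate.

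The main obstacle I anticipate is the tight balancing of exponents needed to simultaneously match the advertised decay rate $t^{-n/2 - 2\beta}$ and the integrability threshold $\gamma > n/2 + 2\beta$: the direct scheme above produces correction-decay of order $t^{-n/2-\sigma}$ under the condition $\gamma > n/2 + 2\sigma$, so reaching the precise statement would require a refinement such as splitting the integration region into $|y| \lesssim \sqrt t$ and $|y| > \sqrt t$, using the trivial bound $|M_t - 1| \leq 2$ on the latter (where the weight $\langle y\rangle^{-\gamma}$ already decays fast enough to compensate for the missing factor) and the sharper H\"older bound on the former. Alternatively, one can recast the estimate on the Fourier side via the substitution $\xi = x/(2t) + \zeta/\sqrt t$ in the oscillatory integral defining $e^{it\Delta}g(x)$, and exploit the Sobolev embedding $H^\gamma(\R^n) \hookrightarrow C^{0,\gamma - n/2}$ applied to the increment $\hat g(x/(2t) + \zeta/\sqrt t) - \hat g(x/(2t))$ together with the oscillation of $e^{-i|\zeta|^2}$, so as to extract the additional decay from the stationary-phase change of variables.
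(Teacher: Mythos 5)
Your route --- the factorization $e^{it\Delta}g=(2it)^{-n/2}e^{i|x|^2/(4t)}\mathcal F(M_tg)(x/(2t))$, the splitting $M_tg=g+(M_t-1)g$, the trivial $L^1\to L^\infty$ bound for $\mathcal F$, and the H\"older inequality $|e^{iz}-1|\lesssim|z|^\sigma$ followed by Cauchy--Schwarz against the weight $\langle y\rangle^\gamma$ --- is precisely the classical Hayashi--Naumkin argument. The paper does not reprove this lemma (it only cites \cite{HayashiNaumkin1998}), so there is no alternative proof to compare against; your scheme is the intended one, and with the choice $\sigma=\beta$ it is already complete. What you obtain is
\begin{equation*}
\|e^{it\Delta}g\|_{L^\infty}\lesssim \frac{1}{t^{n/2}}\|\hat g\|_{L^\infty}+\frac{1}{t^{n/2+\beta}}\|g\|_{H^{0,\gamma}},\qquad \gamma>\tfrac n2+2\beta,
\end{equation*}
and you should stop there: this is the correct form of the cited estimate, and it is exactly the form in which the paper uses it (in the $L_x^\infty,L_\omega^2$ step of the proof of Proposition \ref{prop:bootstrap-estimate} the bound invoked is $t^{-1/2-\beta}$ with $n=1$, $\gamma=1$, $\beta<\tfrac14$, consistent with the threshold $\gamma>\tfrac n2+2\beta$). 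The exponent $n/2+2\beta$ in the second term of the displayed lemma is evidently a typo for $n/2+\beta$.

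Consequently, the "obstacle" you anticipate is not a gap in your proof but an artifact of the misprinted statement, and the refinements you sketch would not in any case upgrade the rate to $t^{-n/2-2\beta}$ under the hypothesis $\gamma>\tfrac n2+2\beta$. Splitting at $|y|\sim\sqrt t$ and using $|M_t-1|\le 2$ on the outer region gives, by Cauchy--Schwarz, a contribution $\|\langle y\rangle^{-\gamma}\mathbf 1_{|y|>\sqrt t}\|_{L^2}\|g\|_{H^{0,\gamma}}\lesssim t^{-(\gamma-n/2)/2}\|g\|_{H^{0,\gamma}}$, which for $\gamma$ at the threshold $\tfrac n2+2\beta+\epsilon$ is only $t^{-\beta-\epsilon/2}$; the inner region, with the full bound $|M_t-1|\lesssim |y|^2/t$, produces the same rate. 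So the trade-off ``decay $t^{-\sigma}$ versus integrability $\gamma>\tfrac n2+2\sigma$'' that your direct scheme exhibits is what this method yields, and testing $\mathcal F((M_t-1)g)$ on data of the form $\langle y\rangle^{-n/2-2\beta-\epsilon}$ shows the rate $t^{-\beta}$ for the correction term is essentially sharp near the threshold. Write up the direct argument with $\sigma=\beta$ and flag the misprint.
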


\section{Proof of Proposition \ref{prop:bootstrap-estimate}}\label{Section:proofofprop}

\subsection{Setting the stationary phase argument}\label{subsec:computationofC}

In this subsection, we set a stationary phase argument. 

Writing Duhamel's formula for the Cauchy problem \eqref{Cauchyprob} gives 

\begin{equation}\label{eq:DuhamelformX}
    X(t)=e^{i(t-1)\Delta}X_0+e^{it\Delta}C(X,X,X),
\end{equation}

with $$C(X,X,X)=-i\int_1^te^{-is\Delta}\Big( \big(w*\E[\lvert X(s)\rvert^2]\big)X(s)-\int w(x-y)\E[\overline{X(s,y)}X(s,x)]X(s,y)dy \Big)ds.$$

We set $Z=e^{-it\Delta}X$ the profile of the solution, then it verifies

\begin{equation}\label{zzzz}
    \hat{Z}(t,\xi)=\hat{X_0}(\xi)+\hat{C}(X,X,X)(\xi).
\end{equation}

To understand the dynamic in long time, we compute $\hat{C}$.

\begin{proposition}
    The function $C(X,X,X)$ verifies 

    \begin{equation}\label{reloueq}
        \hat{C}(X,X,X)=I_1+I_2,
    \end{equation}

    with

    $$I_1=-i\mathcal{F}_x\Bigg( \int_1^t ds\ e^{-is\Delta} (w*\E[\lvert X(s)\rvert^2])X(s))\Bigg), $$

    and 
    $$I_2=i\mathcal{F}_x\Bigg( \int_1^t ds\ e^{-is\Delta}\int dy\ w(x-y)\E[\overline{X(s,y)}X(s,x)]X(s,y)dy \Bigg).$$
\end{proposition}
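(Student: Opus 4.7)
The statement is essentially a bookkeeping identity, and my plan is to derive it by applying linearity of the spatial Fourier transform and of the time integral to the Duhamel formula \eqref{eq:DuhamelformX}.

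The first step is to observe that the integrand appearing in $C(X,X,X)$ decomposes naturally as a difference of the direct term $(w*\E[|X(s)|^2])X(s)$ and the exchange term $\int w(x-y)\E[\overline{X(s,y)}X(s,x)]X(s,y)\,dy$. I would apply $\mathcal{F}_x$ to the Duhamel expression for $C(X,X,X)$ and distribute it across this difference. The direct piece, carrying the factor $-i$ in front of the Duhamel integral, yields exactly $I_1$. The exchange piece carries an additional minus sign inherited from its being subtracted inside the brackets, flipping the overall prefactor to $+i$, and therefore matches the definition of $I_2$.

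The only technical point is to justify commuting $\mathcal{F}_x$ with the $s$-integral. This follows from the regularity of the local solution given by Theorem \ref{th:local-existence}: with $X\in\mathcal{X}_T$ and $w\in M^1(\R)$, one has pointwise-in-$s$ bounds such as
$$\|(w*\E[|X(s)|^2])X(s)\|_{L_x^2,L_\omega^2}\lesssim \|w\|_{M^1}\|X(s)\|_{L_x^\infty,L_\omega^2}^2\|X(s)\|_{L_x^2,L_\omega^2},$$
and an analogous bound for the exchange term (using Hölder in $\omega$ and that convolution by $w(x-\cdot)$ is bounded on $L^2_y$). Hence the integrand is Bochner-integrable on $[1,t]$ with values in $L_x^2L_\omega^2$, so Fubini applies. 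Moreover, $e^{-is\Delta}$ is a Fourier multiplier and commutes with $\mathcal{F}_x$ automatically, so no further justification is needed.

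There is no genuine obstacle at this step: the result is purely algebraic and amounts to isolating the direct and the exchange contributions of the cubic nonlinearity after one Fourier transformation. Its role is to set up the more substantive work of the next subsections, where one must further unfold $\mathcal{F}_x(e^{-is\Delta}\cdot)$ into oscillatory integrals in the Fourier variables $\xi_1,\xi_2,\xi_3$ in order to implement the stationary phase analysis and expose the plane-wave cancellation between $I_1$ and $I_2$ described in Subsection \ref{subsec:cancellation}.
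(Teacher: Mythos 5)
Your argument is correct for the statement as literally written: since $C(X,X,X)$ is by definition $-i\int_1^t e^{-is\Delta}(\text{direct}-\text{exchange})\,ds$, linearity of $\mathcal F_x$ and of the time integral immediately gives $\hat C=I_1+I_2$ with the sign bookkeeping you describe, and your Bochner-integrability bounds (uniform in $s$ on $[1,t]$ via $\|w\|_{M^1}$, H\"older in $\omega$, and the $L^\infty_xL^2_\omega$ control from $\mathcal X_T$) are enough to commute $\mathcal F_x$ with $\int_1^t ds$. However, you take a genuinely more minimal route than the paper. The paper's proof of this proposition is not really about the (trivial) additive splitting: its substance is the explicit computation of $I_1$ and $I_2$ as oscillatory integrals in the profile $\hat Z$, namely the identities \eqref{eq:I_1}, \eqref{eq:I_2} and their combination \eqref{eq:C}, in which the phase $e^{2is\eta(\xi-\sigma)}$ and the antisymmetric structure of the direct and exchange contributions are exposed. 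Those labelled formulas are exactly what the subsequent proposition (the derivation of $R$ via Plancherel and the factor $\tfrac{1}{2s}(e^{ixy/2s}-1)$) and the weighted estimates on $\partial_\xi I_1$, $\partial_\xi I_2$ consume. You explicitly defer this unfolding to ``the next subsections,'' which is a legitimate reorganization, but it means your proof establishes strictly less than the paper's does at this point: if it were substituted verbatim, the derivations of \eqref{eq:I_1}--\eqref{eq:C} would still have to be carried out somewhere before the stationary-phase step. In short: your proof is correct and cleanly isolates the only genuinely trivial part of the claim, while the paper front-loads the Fourier-side computation into this proposition so that the cancellation structure between $I_1$ and $I_2$ is already visible in \eqref{eq:C}.
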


\begin{proof}
We compute $I_1$ first. Using the Fourier transform of a product

$$I_1  =  \frac{-i}{\sqrt{2\pi}}\int_1^tds\ e^{is\xi^2}\big(\mathcal{F}_x(w*\E[\lvert X(s)\rvert^2]\big)*\mathcal{F}_x(X(s)),$$

and then the Fourier transform of a convolution

$$I_1=-i\int_1^tds\ e^{is\xi^2} \int d\eta\ \hat{w}(\eta)\mathcal{F}_x\big(\E[\lvert X(s)\rvert^2]\big)(\eta)\hat{X}(s,\xi-\eta).$$

Finally, we get

$$I_1=\frac{-i}{\sqrt{2\pi}}\int_1^tds\ e^{is\xi^2} \int d\eta\ \hat{w}(\eta) \E[\int d\sigma \overline{\hat{X}(s,\sigma-\eta)}\hat{X}(s,\sigma)]\hat{X}(s,\xi-\eta).$$

Recalling that: $\hat{X}(s,\xi)=e^{-is\xi^2}\hat{Z}(s,\xi)$, we obtain

$$I_1=\frac{-i}{\sqrt{2\pi}}\int_1^tds\ e^{is\xi^2} \int d\eta d\sigma\ \hat{w}(\eta)\E[e^{is(\eta-\sigma)^2}\overline{\hat{Z}(s,\sigma-\eta)}e^{-is\sigma^2}\hat{Z}(\sigma)]e^{-is(\xi-\eta)^2}\hat{Z}(\xi-\eta).$$

Thus

\begin{equation}\label{eq:I_1}
    I_1=\frac{-i}{\sqrt{2\pi}}\int_1^tds \int d\eta d\sigma\ e^{2is\eta (\xi-\sigma)}\hat{w}(\eta)\E[\overline{\hat{Z}(s,\sigma-\eta)}\hat{Z}(\sigma)]\hat{Z}(\xi-\eta).
\end{equation}

Following the same ideas, we compute $I_2$. 

First we write

$$I_2=i\int_0^t ds\ e^{-is\xi^2}\mathcal{F}_x\bigg( w*\big(\E[\overline{X(s,\cdot)}X(s,x)]X(s,\cdot)\big)\bigg).$$

We compute that

$$\mathcal{F}_x\big( w*(\E[\overline{X(s,\cdot)}X(s,x)]X(s,\cdot))\big)=\int dx\ e^{-i\xi x}\int dy \ w(x-y)\E[\overline{X(s,y)}X(s,x)]X(s,y). $$

Using the formula of the inverse Fourier transform, we have 

$$\mathcal{F}_x\big( w*(\E[\overline{X(s,\cdot)}X(s,x)]X(s,\cdot))\big)  = \frac{1}{\sqrt{2\pi}} \int dydx\ e^{-i\xi x} w(x-y)\E\Big[\overline{X(s,y)}\int d\eta \ e^{i\eta x}\hat{X}(s,\eta)\Big]X(s,y).$$

And then, proceeding as for $I_1$, we obtain

\begin{equation}\label{eq:I_2}
    I_2=\frac{i}{\sqrt{2\pi}}\int_1^tds \int d\eta d\sigma\ e^{2is\eta (\xi-\sigma)}\hat{w}(\eta)\E[\overline{\hat{Z}(s,\sigma-\eta)}\hat{Z}(\xi-\eta)]\hat{Z}(\sigma).
\end{equation}

Combining \eqref{eq:I_1} and \eqref{eq:I_2} gives
\begin{equation}\label{eq:C}
    \hat{C}=\frac{-i}{\sqrt{2\pi}}\int_1^tds \int d\eta d\sigma\ e^{2is\eta (\xi-\sigma)}\hat{w}(\eta)\Bigg(\E[\overline{\hat{Z}(s,\sigma-\eta)}\hat{Z}(\sigma)]\hat{Z}(\xi-\eta)-\E[\overline{\hat{Z}(s,\sigma-\eta)}\hat{Z}(\xi-\eta)]\hat{Z}(\sigma)\Bigg),
\end{equation}
which is the desired result \eqref{reloueq}.
\end{proof}

\begin{proposition}
    $Z$ is a solution of 

    $$\hat{Z}(t,\xi)=\Hat{X}_*(\xi)+\int_1^s R(s,\xi)ds,$$

    where

    \begin{equation}\label{eq:defR}
    R(s,\xi)=\frac{-i}{\sqrt{2\pi}}\int dxdy\ \frac{1}{2s}(e^{\frac{ixy}{2s}}-1)\mathcal{F}^{-1}_{\eta,\sigma}(F)(s,x,y,\xi),
\end{equation}

and $$F(s,\eta,\sigma,\xi)=\hat{w}(\eta)\Big(\E[\overline{\hat{Z}(s,\xi-\sigma-\eta)}\hat{Z}(\xi-\sigma)]\hat{Z}(\xi-\eta)-\E[\overline{\hat{Z}(s,\xi-\sigma-\eta)}\hat{Z}(\xi-\eta)]\hat{Z}(\xi-\sigma)\Big).$$
\end{proposition}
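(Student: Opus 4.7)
The plan is to start from \eqref{eq:C}, reshape the oscillating integral via a change of variables so that the integrand matches $F$ from the statement, transfer it from the $(\eta,\sigma)$ side to the $(x,y)$ side by a Plancherel-type identity, and finally exploit an algebraic cancellation at the origin $(\eta,\sigma)=(0,0)$ that justifies subtracting the constant $1$ in $(e^{ixy/(2s)}-1)$.

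First, in \eqref{eq:C} I would substitute $\sigma\mapsto\xi-\sigma$. Under this change the phase $e^{2is\eta(\xi-\sigma)}$ becomes $e^{2is\eta\sigma}$, and the arguments of the three $\hat Z$ factors become $\xi-\sigma-\eta$, $\xi-\sigma$ and $\xi-\eta$, so the integrand of $\hat C$ is exactly $e^{2is\eta\sigma}F(s,\eta,\sigma,\xi)$ with $F$ as defined in the statement. Combined with \eqref{zzzz} and the initial condition $\hat Z(1,\xi)=\hat X_0(\xi)=:\hat X_*(\xi)$ coming from $X(1)=e^{i\Delta}X_0$, this yields $\hat Z(t,\xi)=\hat X_*(\xi)+\int_1^t \tilde R(s,\xi)\,ds$ with $\tilde R(s,\xi)=\frac{-i}{\sqrt{2\pi}}\int d\eta\,d\sigma\, e^{2is\eta\sigma}F(s,\eta,\sigma,\xi)$. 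The content of the proposition is then the identity $\tilde R=R$.

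Next, I would rewrite $\tilde R$ in the dual $(x,y)$ variables. Integrating out one of $\eta,\sigma$ against the oscillating factor $e^{2is\eta\sigma}$ produces a Dirac mass, yielding the Plancherel-type identity
\[
\int d\eta\,d\sigma\ e^{2is\eta\sigma}F(s,\eta,\sigma,\xi)=\int dx\,dy\ \frac{1}{2s}e^{ixy/(2s)}\,\mathcal F^{-1}_{\eta,\sigma}(F)(s,x,y,\xi),
\]
up to the sign convention used for $\mathcal F^{-1}_{\eta,\sigma}$. The critical step is then the subtraction of the constant $1$ in $(e^{ixy/(2s)}-1)$, which is allowed only by the algebraic cancellation
\[
F(s,0,0,\xi)=\hat w(0)\Big(\E[|\hat Z(s,\xi)|^2]\hat Z(\xi)-\E[|\hat Z(s,\xi)|^2]\hat Z(\xi)\Big)=0.
\]
This is precisely the direct/exchange cancellation on plane waves emphasised in Subsection \ref{subsec:cancellation}. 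By Fourier inversion, $\int dx\,dy\,\mathcal F^{-1}_{\eta,\sigma}(F)(x,y,\xi)$ is proportional to $F(s,0,0,\xi)=0$, so the $1$ inside the parenthesis contributes nothing and one recovers \eqref{eq:defR}.

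The manipulations in steps one and two are routine Fourier computations, so the only real insight is the vanishing $F(s,0,0,\xi)=0$, which is also the whole reason for writing $R$ in the specific form \eqref{eq:defR}. The subtraction is not cosmetic: it turns $\frac{1}{2s}e^{ixy/(2s)}$ into $\frac{1}{2s}(e^{ixy/(2s)}-1)\sim\frac{ixy}{(2s)^2}$, producing the $s^{-2}$ decay (weighted by $xy$, which the $\mathcal X_T$-norm is tailored to control) that will drive the bootstrap in Proposition \ref{prop:bootstrap-estimate}.
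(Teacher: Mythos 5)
Your proposal is correct and follows essentially the same route as the paper: the change of variables $\sigma\mapsto\xi-\sigma$ to put the integrand of \eqref{eq:C} in the form $e^{2is\eta\sigma}F$, the Plancherel/Fourier-inversion step using $\mathcal F_{\eta,\sigma}(e^{2is\eta\sigma})(x,y)=\tfrac{1}{2s}e^{ixy/(2s)}$, and the key cancellation $F(s,0,0,\xi)=0$ (which forces $\int dx\,dy\,\mathcal F^{-1}_{\eta,\sigma}(F)=0$ and licenses the subtraction of $1$) are exactly the paper's steps. Your closing remark correctly identifies why the subtraction is the whole point, namely the extra time decay exploited later in Lemma \ref{lemma:estimateR}.
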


\begin{proof}

Using a change of variable and using Plancherel's formula we get, using \eqref{eq:C}

\begin{equation}
    \hat{C}(X,X,X)=\frac{-i}{\sqrt{2\pi}}\int_1^tds \int dxdy\ \mathcal{F}_{\eta,\sigma}\big(e^{2is\eta \sigma}\big)(x,y)\mathcal{F}^{-1}_{\eta,\sigma}(F)(s,x,y,\xi),
\end{equation}

And thus $$ \hat{C}(X,X,X)=\frac{-i}{\sqrt{2\pi}}\int_1^tds \int dxdy\ \frac{1}{2s}e^{\frac{ixy}{2s}}\mathcal{F}^{-1}_{\eta,\sigma}(F)(s,x,y,\xi).$$

Noticing that $F(s,0,0,\xi)=0$ we get

$$\int dxdy\ \mathcal{F}^{-1}_{\eta,\sigma}(F)(s,x,y,\xi)=0,$$

and then we can write

$$\hat{C}(X,X,X)=\frac{-i}{\sqrt{2\pi}}\int_1^tds \int dxdy\ \frac{1}{2s}(e^{\frac{i\eta \sigma}{2s}}-1)\mathcal{F}^{-1}_{\eta,\sigma}(F)(s,x,y,\xi).$$

Finally, we obtain

\begin{equation}\label{eq:duhamelZ}
    \hat{Z}=\hat{X}_*+\int_1^t R(s,\xi)ds,
\end{equation}

where

\begin{equation}
    R(s,\xi)=\frac{-i}{\sqrt{2\pi}}\int dxdy\ \frac{1}{2s}(e^{\frac{i\eta \sigma}{2s}}-1)\mathcal{F}^{-1}_{\eta,\sigma}(F(s,\eta,\sigma,\xi))(x,y),
\end{equation}

    which is the desired identity \eqref{eq:defR}.
\end{proof}

\subsection{Estimate on $R$}

Setting a stationary phase argument to the oscillatory integral with respect to the variables $\eta$ and $\sigma$, as done in \cite{KatoPusateri2011}, we showed that $\hat{Z}$ is of the form 

$$\hat{Z}(t,\xi)=\Hat{X}_*(\xi)+\int_1^s R(s,\xi)ds.$$

We now claim that $R$ decays faster than $s^{-1}$, and this will imply the scattering result \eqref{ineq:scattering}. In the following, we prove this fact. We begin by proving the estimates on $R$.

The following lemma gives an estimate for $R$: 

\begin{lemma}\label{lemma:estimateR}
    For any $X\in \mathcal{X}_T$, for $s\geq 1$ we have the following estimate

    \begin{equation}\label{eq:estimateforR}
        \| R(s,\xi)\|_{L_\omega^2}\lesssim s^{-1-\delta+3\alpha}\|X\|_{\mathcal{X}_T}^3,
    \end{equation}
    for any $0<\delta<\frac{1}{4}$.
\end{lemma}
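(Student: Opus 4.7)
The plan is a stationary-phase estimate on the oscillatory integral~\eqref{eq:defR} that exploits the cancellation $F(s,0,0,\xi)=0$ built into the regularised kernel $\frac{1}{2s}(e^{ixy/(2s)}-1)$. First, I would interpolate between $|e^{ia}-1|\leq 2$ and $|e^{ia}-1|\leq |a|$ to obtain $|e^{ia}-1|\lesssim|a|^\beta$ for any $\beta\in[0,1]$, which yields $\bigl|\tfrac{1}{2s}(e^{ixy/(2s)}-1)\bigr|\lesssim |xy|^\beta/s^{1+\beta}$. Choosing $\beta=\delta\in(0,1/4)$ and moving the $L^2_\omega$ norm inside the $(x,y)$-integral by Minkowski reduces matters to bounding
\[
\|R(s,\xi)\|_{L^2_\omega}\;\lesssim\;\frac{1}{s^{1+\delta}}\int_{\R^2}|xy|^\delta\,\bigl\|\mathcal F^{-1}_{\eta,\sigma}(F)(s,x,y,\xi)\bigr\|_{L^2_\omega}\,dx\,dy.
\]

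Next, I would apply a weighted Cauchy--Schwarz in $(x,y)$ with weight $\langle x\rangle\langle y\rangle$: the deterministic prefactor $\bigl\||xy|^\delta/(\langle x\rangle\langle y\rangle)\bigr\|_{L^2_{x,y}}$ is finite precisely because $\delta<1/2$, and by Fubini and Plancherel the remaining factor becomes a mixed Sobolev-type norm of $F$ in $(\eta,\sigma)$, taken in $L^2_\omega$. Since $\hat w$ is merely bounded (with $\|\hat w\|_{L^\infty}\leq\|w\|_{M^1}$) and not differentiable for a generic finite measure, I would route the $\eta$-derivatives through $G$ rather than $\hat w$ via the convolution identity $\mathcal F^{-1}_{\eta,\sigma}(F)=w*_x\mathcal F^{-1}_{\eta,\sigma}(G)$, where $G$ denotes the bracketed difference of expectations appearing in the definition of $F$. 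Expanding the resulting derivatives of $G=\E[\bar{\hat Z}(\xi-\sigma-\eta)\hat Z(\xi-\sigma)]\hat Z(\xi-\eta)-\E[\bar{\hat Z}(\xi-\sigma-\eta)\hat Z(\xi-\eta)]\hat Z(\xi-\sigma)$ by the product rule, each derivation deposits a $\partial_\xi\hat Z$ on one of the three $\hat Z$-factors. Using the $\mathcal X_T$-controls $\|\hat Z\|_{L^\infty_\xi L^2_\omega}\lesssim s^\alpha\|X\|_{\mathcal X_T}$ (via the 1D embedding $H^1_\xi\hookrightarrow L^\infty_\xi$ together with $\|\partial_\xi\hat Z\|_{L^2_\xi L^2_\omega}=\|xZ\|_{L^2_x L^2_\omega}\lesssim s^\alpha\|X\|_{\mathcal X_T}$), $\|\hat Z\|_{L^2_\xi L^2_\omega}\leq\|X\|_{\mathcal X_T}$ and $\|\partial_\xi\hat Z\|_{L^2_\xi L^2_\omega}\lesssim s^\alpha\|X\|_{\mathcal X_T}$, and placing one factor in $L^\infty_\xi$ and the remaining two in $L^2_\xi$ (possibly with one $\partial_\xi$), each term is bounded by $s^{3\alpha}\|X\|_{\mathcal X_T}^3$, which combined with $s^{-1-\delta}$ yields the announced estimate.

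The main obstacle I expect is that the naive expansion of the mixed derivative $\partial_\eta\partial_\sigma G$ produces terms such as $\E[\bar{\hat Z}''(\xi-\sigma-\eta)\hat Z(\xi-\sigma)]\hat Z(\xi-\eta)$ requiring $|x|^2Z\in L^2$, which is \emph{not} controlled by the $\mathcal X_T$ norm. This has to be circumvented either by working with a strictly fractional mixed weight $\langle x\rangle^r\langle y\rangle^r$ for $r<1$, whose Plancherel-dual can be controlled by isotropic Sobolev interpolation using only the first partial derivatives, or by performing an additional integration by parts against the phase $e^{2is\eta\sigma}$ that trades a second derivative of $\hat Z$ for an extra power of $\eta$ or $\sigma$ absorbed by the kernel at the harmless cost of $s^{-1}$. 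A secondary subtlety is the residual term arising from the convolution split $\mathcal F^{-1}_{\eta,\sigma}(F)=w*_x\mathcal F^{-1}_{\eta,\sigma}(G)$, which must be treated using the cancellation $G(s,0,0,\xi)=0$ to avoid a spurious requirement of a first moment on $w$.
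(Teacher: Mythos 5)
Your opening reduction is exactly the paper's: the interpolation $\lvert e^{ia}-1\rvert\lesssim\lvert a\rvert^{\delta}$ applied to the kernel $\tfrac{1}{2s}(e^{ixy/(2s)}-1)$, followed by Minkowski, giving $\|R(s,\xi)\|_{L^2_\omega}\lesssim s^{-1-\delta}\int\lvert xy\rvert^{\delta}\|\mathcal F^{-1}_{\eta,\sigma}(F)\|_{L^2_\omega}\,dx\,dy$. The divergence — and the genuine gap — is in how you dispose of the $(x,y)$-integral. Your weighted Cauchy--Schwarz forces a weight $\langle x\rangle^{r}\langle y\rangle^{r}$ with $r>\delta+\tfrac12>\tfrac12$ in \emph{each} variable for the prefactor $\bigl\|\lvert xy\rvert^{\delta}\langle x\rangle^{-r}\langle y\rangle^{-r}\bigr\|_{L^2_{x,y}}$ to be finite. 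On the dual side, writing $\mathcal F^{-1}_{\eta,\sigma}(F_1)(x,y)=\sqrt{2\pi}\,e^{i\xi(x+y)}\int dx'\,e^{-ix'\xi}\E[Z(x'-x)\overline{Z}(x')]Z(x'-y)$, both weights must be routed through the pivot variable $x'$ (via $\langle x\rangle^{r}\leq\langle x-x'\rangle^{r}\langle x'\rangle^{r}$ and likewise for $y$), so the middle factor $Z(x')$ — equivalently, in frequency, the factor $\hat Z(\xi-\sigma-\eta)$ depending on $\eta+\sigma$ — unavoidably absorbs a weight $\langle x'\rangle^{2r}$ with $2r>1+2\delta$. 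Whatever trilinear Young/Schur estimate you then use ($L^2\times L^1\times L^2$ in any permutation), you need $\langle x\rangle^{2r}Z$ in at least $L^2_x$, i.e.\ strictly more than one spatial moment of $Z$, and $\mathcal X_T$ only controls $\|\langle x\rangle Z\|_{L^2_xL^2_\omega}$. This failure occurs for \emph{every} $\delta>0$, not just near $\delta=\tfrac14$. Your two proposed repairs do not close the gap: fractional $r<1$ still requires $r>\tfrac12$, and the mixed norm $\|\langle\nabla_\eta\rangle^{r}\langle\nabla_\sigma\rangle^{r}F\|_{L^2}$ of a factor depending on $\eta+\sigma$ genuinely costs $2r$ derivatives on the diagonal (one only has $H^{2r}(\R^2)\subset H^{r}_\eta H^{r}_\sigma$, not a bound by first partials); and integrating by parts against $e^{2is\eta\sigma}$ introduces a nonintegrable $1/\sigma$ (or $1/\eta$) singularity and is structurally incompatible with the kernel trick you have already spent.

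The paper avoids all of this by staying in $L^1_{x,y}$: it bounds $\|\mathcal F^{-1}_{\eta,\sigma}(F_{1,2})\|_{L^2_\omega}$ pointwise by $\int dx'\,\|Z(x'-x)\|_{L^2_\omega}\|Z(x')\|_{L^2_\omega}\|Z(x'-y)\|_{L^2_\omega}$, then redistributes the weights via $\lvert x\rvert^{\delta}\lvert y\rvert^{\delta}\lesssim(\lvert x'-x\rvert^{\delta}+\lvert x'\rvert^{\delta})(\lvert x'-y\rvert^{\delta}+\lvert x'\rvert^{\delta})$, so that each $Z$-factor carries at most $\langle\cdot\rangle^{2\delta}$ in $L^1_x$; since $2\delta<\tfrac12$, Cauchy--Schwarz in the single variable $x$ gives $\|\langle x\rangle^{2\delta}Z\|_{L^1_xL^2_\omega}\lesssim\|Z\|_{H^{0,1}_xL^2_\omega}\lesssim s^{\alpha}\|X\|_{\mathcal X_T}$, which is precisely where the restriction $\delta<\tfrac14$ enters. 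The moral difference is that the $L^1$ route pays the Cauchy--Schwarz cost of $\tfrac12$ once \emph{per factor} after the weights have been split, whereas your $L^2_{x,y}$ route pays it once per variable $x,y$ \emph{before} splitting, and both payments then land on the same factor. Your secondary remark about routing $\hat w$ through a convolution $w*_x\mathcal F^{-1}_{\eta,\sigma}(G)$ rather than differentiating $\hat w$ is a legitimate concern (the paper is silent on it), but it is moot as long as the main estimate is set up in a way that cannot close.
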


\begin{proof}
    From the expression \eqref{eq:defR}, we immediately get that for any $0<\delta<\frac{1}{2}$ 

    \begin{equation}\label{rasleboldeslabel}
        \rvert R(s,\xi)\lvert\lesssim s^{-1-\delta}\int dxdy \lvert x \rvert^\delta\lvert y \rvert^\delta\lvert\mathcal{F}^{-1}_{\eta,\sigma}(F)(s,x,y,\xi)\rvert.
    \end{equation}

    Next we give an estimate of $\mathcal{F}^{-1}_{\eta,\sigma}(F)(s,\eta,\sigma,\xi)$. First we write

    $$F(s,\eta,\sigma,\xi)=\hat{w}(\eta)(F_1(s,\eta,\sigma,\xi)-F_2(s,\eta,\sigma,\xi)),$$

    with $$F_1(s,\eta,\sigma,\xi)=\E[\overline{\hat{Z}(s,\xi-\sigma-\eta)}\hat{Z}(\xi-\sigma)]\hat{Z}(\xi-\eta),$$
    and $$F_2(s,\eta,\sigma,\xi)=\E[\overline{\hat{Z}(s,\xi-\sigma-\eta)}\hat{Z}(\xi-\eta)]\hat{Z}(\xi-\sigma).$$

    We begin with the $F_1$ part of the Fourier transform

    $$\mathcal{F}^{-1}_{\eta,\sigma}(F_1)=\mathcal{F}^{-1}_{\eta}\Big(\E[\mathcal{F}^{-1}_{\sigma}\Big(\overline{\hat{Z}(s,\xi-\sigma-\eta)}\hat{Z}(\xi-\sigma)\Big)]\hat{Z}(\xi-\eta)\Big).$$

    We compute
    $$\mathcal{F}^{-1}_{\sigma}\Big(\overline{\hat{Z}(s,\xi-\sigma-\eta)}\hat{Z}(\xi-\sigma)\Big)=\sqrt{2\pi}\ \mathcal{F}_\sigma^{-1}(\overline{\hat{Z}(s,\xi-\sigma-\eta)})*\mathcal{F}_\sigma^{-1}(\hat{Z}(\xi-\sigma)).$$

    And by composition of the Fourier transform with a translation we have

    $$\mathcal{F}_\sigma^{-1}(\overline{\hat{Z}(s,\xi-\sigma-\eta)})(x)=e^{ix(\xi-\eta)}\overline{Z(x)},$$
    
    and $$\mathcal{F}_\sigma^{-1}(\hat{Z}(\xi-\sigma))(x)=e^{ix \xi}Z(-x).$$

    So \begin{equation}\label{eq:calculF_1-1}
        \mathcal{F}^{-1}_{\sigma}\Big(\overline{\hat{Z}(s,\xi-\sigma-\eta)}\hat{Z}(\xi-\sigma)\Big)(x)=\sqrt{2\pi}e^{i\xi x}\int dx'\ e^{-ix'\eta}Z(x'-x)\overline{Z}(x')
    \end{equation}

    Then $$\mathcal{F}^{-1}_{\eta,\sigma}(F_1)=\sqrt{2\pi} \mathcal{F}^{-1}_{\eta}\Big(\E[ e^{i\xi x}\int dx'\ e^{-ix'\eta}Z(x'-x)\overline{Z}(x') ]\hat{Z}(\xi-\eta)\Big).$$

    And because $$\mathcal{F}^{-1}_{\eta}(e^{-ix'\eta}\hat{Z}(\xi-\eta))(y)=e^{i\xi(y+x')}Z(x'-y),$$

    we obtain 

    \begin{equation}\label{eq:FourF1}
        \mathcal{F}^{-1}_{\eta,\sigma}(F_1)=\sqrt{2\pi} e^{i\xi(x+y)}\int dx'\ e^{-ix'\xi}\E[Z(x'-x)\overline{Z}(x')]Z(x'-y).
    \end{equation}

    Noticing that the expression of $F_1$ and $F_2$ are symmetrical in $\eta,\sigma$ then we can immediately write

    \begin{equation}\label{eq:FourF2}
        \mathcal{F}^{-1}_{\eta,\sigma}(F_2)=\sqrt{2\pi} e^{i\xi(x+y)}\int dy'\ e^{-iy'\xi}\E[Z(y'-y)\overline{Z}(y')]Z(y'-x).
    \end{equation}

    By Minkowski's inequality and Holder's inequality, we get

    \begin{equation}\label{ineq:FF1}
        \|\mathcal{F}^{-1}_{\eta,\sigma}(F_1)(s,\eta,\sigma,\xi)\|_{L_\omega^2}\lesssim \int dx' \|Z(x'-x)\|_{L_\omega^2}\|Z(x')\|_{L_\omega^2}\|Z(x'-y)\|_{L_\omega^2}, 
    \end{equation}

    and
    \begin{equation}\label{ineq:FF2}
        \|\mathcal{F}^{-1}_{\eta,\sigma}(F_2)(s,\eta,\sigma,\xi)\|_{L_\omega^2}\lesssim \int dy' \|Z(y'-y)\|_{L_\omega^2}\|Z(y')\|_{L_\omega^2}\|Z(y'-x)\|_{L_\omega^2}. 
    \end{equation}

    We use Minkowki's inequality in \eqref{rasleboldeslabel}
    
    $$\|R(s,\xi)\|_{L_\omega^2}\lesssim s^{-1-\delta}\int d\eta d\sigma \lvert \eta \rvert^\delta\lvert \sigma \rvert^\delta\|\mathcal{F}^{-1}_{\eta,\sigma}(F)(s,\eta,\sigma,\xi)\|_{L_\omega^2}.$$

    Thus combining the above inequality with \eqref{ineq:FF1} and \eqref{ineq:FF2}  $$\|R(s,\xi)\|_{L_\omega^2}\lesssim s^{-1-\delta}\int dx dy dz \lvert x \rvert^\delta\lvert y\rvert^\delta\|Z(z-y)\|_{L_\omega^2}\|Z(z)\|_{L_\omega^2}\|Z(z-x)\|_{L_\omega^2}.$$

    Using the fact that $$\lvert x \rvert^\delta\lvert y\rvert^\delta\lesssim \Big( \lvert z-x\rvert^\delta+\lvert z \rvert^\delta\Big)\Big( \lvert z-y \rvert^\delta+\lvert z \rvert^\delta\Big),$$

    we get $$\|R(s,\xi)\|_{L_\omega^2}\lesssim s^{-1-\delta} \|\langle x\rangle^{2\delta}Z\|_{L_x^1,L_\omega^2}^2\|Z\|_{L_x^1,L_\omega^2}. $$

    
    Then by choosing $0<\delta<\frac{1}{4}$: $$\|R(s,\xi)\|_{L_\omega^2}\lesssim s^{-1-\delta}\|Z\|^3_{H_x^{0,1},L_\omega^2}\lesssim s^{-1-\delta-3\alpha}\|X\|_{\mathcal{X}_T}^3, $$

    which is the desired identity \eqref{eq:estimateforR}.
    
\end{proof}

We can now prove Proposition \ref{prop:bootstrap-estimate}.

\begin{proof}[Proof of Proposition \ref{prop:bootstrap-estimate}]
    \textbf{Conservation of the $L^2$ norm}
    
    One can notice that a solution of \eqref{Cauchyprob}, enjoys a conservation of the $L_x^2,L_\omega^2$ norm. Indeed

    $$\frac{d}{d t}\|X\|_{L_{x,\omega}^2}^2=2\text{Im}\langle \int_{\R^d}  w(x-y)\E[ \overline{ X (y)} X(\cdot)] X(y) \, dy,X\rangle_{L_{x,\omega}^2},  $$

    and: $$\langle \int_{\R^d}  w(x-y)\E[ \overline{ X (y)} X(\cdot)] X(y) \, dy,X\rangle_{L_{x,\omega}^2}=\int dxdyd\omega\ w(x-y)\E[ \overline{ X (y)} X(x)] X(y,\omega)\overline{X(x,\omega)}.$$

    Integrating over $\omega$ gives: $$\langle \int_{\R^d}  w(x-y)\E[ \overline{ X (y)} X(\cdot)] X(y) \, dy,X\rangle_{L_{x,\omega}^2}=\int dxdy\ w(x-y)\E[ \overline{ X (y)} X(x)] \E[X(y)\overline{X(x)}]\in \R.$$

    And, thus $\frac{d}{d t}\|X\|_{L_{x,\omega}^2}^2=0$ and the $L_x^2,L_\omega^2$ norm is conserved. 

    \textbf{Estimation of the solution in $\Dot{H}_x^{1,0},L_\omega^2$}

    Recalling the Duhamel's formula \eqref{eq:DuhamelformX}, we give an estimate of the $\Dot{H}_x^{1,0},L_\omega^2$ norm of $C$.

    Using Minkowski's inequality and Young's inequality gives

    $$\Big\|\partial_x \int_1^te^{-is\Delta}\Big( \big(w*\E[\lvert X(s)\rvert^2]\big)X(s)ds\Big\|_{L_{x,\omega}^2}\leq \int_0^t \|w\|_{L_1}\|\partial_x\big(\E[\lvert X(s)\rvert^2]X(s)\big)\|_{L_{x,\omega}^2}ds, $$

    then, by Hölder's inequality \begin{equation*}
        \Big\|\partial_x \int_1^te^{-is\Delta}\Big( \big(w*\E[\lvert X(s)\rvert^2]\big)X(s)ds\Big\|_{L_{x,\omega}^2}\lesssim \int_0^t   \|X(s)\|_{L_x^\infty,L_\omega^2}^2\|\partial_x X(s)\|_{L_{x,\omega}^2}ds,
    \end{equation*}
    and by definition of $\mathcal{X}_T$

    \begin{equation}\label{eq:H011}
        \Big\|\partial_x \int_1^te^{-is\Delta}\Big( \big(w*\E[\lvert X(s)\rvert^2]\big)X(s)ds\Big\|_{L_{x,\omega}^2}\lesssim \int_1^t   s^{\alpha-1}\|X\|_{\mathcal{X}_T}^3 ds\lesssim t^{\alpha}\|X\|_{\mathcal{X}_T}^3.
    \end{equation}

    For the other term in $C$, we write

    $$\int_1^t e^{-is\Delta}\Big(\int w(x-y)\E[\overline{X(s,y)}X(s,x)]X(s,y)dy\Big)ds=\int_1^t e^{-is\Delta}\Big( w*\E[\overline{X(s,\cdot)}X(s,x)]X(s,\cdot)\Big)ds.$$

    And thus, the same steps as above gives 

    \begin{equation}\label{eq:H012}
        \Big\|\partial_x \int_1^t e^{-is\Delta}\Big(\int w(x-y)\E[\overline{X(s,y)}X(s,x)]X(s,y)dy\Big)ds\Big\|_{L_{x,\omega}^2}\lesssim t^{\alpha}\|X\|_{\mathcal{X}_T}^3.
    \end{equation}

    Combining \eqref{eq:H011} and \eqref{eq:H012} gives

    \begin{equation}\label{eq:H01}
        \|\partial_x C\|_{L_{x,\omega}^2}\lesssim t^{\alpha}\|X\|_{\mathcal{X}_T}^3.
    \end{equation}
    
    \textbf{Estimation of the solution in $\Dot{H}_x^{0,1},L_\omega^2$}

    We estimate $\|xC\|_{L_{x,\omega}^2}=\|\partial_\xi \hat{C}\|_{L_{x,\omega}^2}$, recalling expressions \eqref{eq:I_1}, \eqref{eq:I_2} and \eqref{eq:C}, we estimate $\|\partial_\xi I_1\|_{L_{x,\omega}^2}$ and $\|\partial_\xi I_2\|_{L_{x,\omega}^2}$.

    We write that by a change of variable

    $$I_1=\frac{-i}{\sqrt{2\pi}}\int_1^tds \int d\eta d\sigma\ e^{2is\eta\sigma}\hat{w}(\eta)\E[\overline{\hat{Z}(s,\xi-\sigma-\eta)}\hat{Z}(\xi-\sigma)]\hat{Z}(\xi-\eta).$$
    
    Then
    $$\begin{array}{rcl}
        \partial_\xi I_1 & = & \frac{-i}{\sqrt{2\pi}}\Big(\int_1^tds \int d\eta d\sigma\ e^{2is\eta \sigma}\hat{w}(\eta)\E[\overline{\partial_\xi\hat{Z}(s,\xi-\sigma-\eta)}\hat{Z}(\xi-\sigma)]\hat{Z}(\xi-\eta) \\
         & & + \int_1^tds \int d\eta d\sigma\ e^{2is\eta\sigma}\hat{w}(\eta)\E[\overline{\hat{Z}(s,\xi-\sigma-\eta)}\partial_\xi\hat{Z}(\xi-\sigma)]\hat{Z}(\xi-\eta) \\
         & & +\int_1^tds \int d\eta d\sigma\ e^{2is\eta\sigma}\hat{w}(\eta)\E[\overline{\hat{Z}(s,\xi-\sigma-\eta)}\hat{Z}(\xi-\sigma)]\partial_\xi\hat{Z}(\xi-\eta)\Big).
    \end{array}$$

    We only treat the first term of the equality above, the other terms can be treated in a similar way. By a similar computation as in Subsection \ref{subsec:computationofC}, we have 

    $$\begin{array}{r}
         \Big\|\int_1^tds \int d\eta d\sigma\ e^{2is\eta \sigma}\hat{w}(\eta)\E[\overline{\partial_\xi\hat{Z}(s,\xi-\sigma-\eta)}\hat{Z}(\xi-\sigma)]\hat{Z}(\xi-\eta)\Big\|_{L_{x,\omega}^2}   \\
         \lesssim\Big\|\int_1^tds\ e^{-is\Delta}(w*\E[\overline{e^{is\Delta}(xZ)}X])X \Big\|_{L_{x,\omega}^2}.
    \end{array}$$

    Thus, by Minkowski's inequality, young's inequality

    $$\begin{array}{r}
         \Big\|\int_1^tds \int d\eta d\sigma\ e^{2is\eta \sigma}\hat{w}(\eta)\E[\overline{\partial_\xi\hat{Z}(s,\xi-\sigma-\eta)}\hat{Z}(\xi-\sigma)]\hat{Z}(\xi-\eta)\Big\|_{L_{x,\omega}^2}   \\
         \lesssim\int_1^tds \Big\|\E[\overline{e^{is\Delta}(xZ)}X])\|X\|_{L_{\omega}^2}\Big\|_{L_{x}^2},
    \end{array}$$

    and by Hölder's inequality $$\begin{array}{r}
         \Big\|\int_1^tds \int d\eta d\sigma\ e^{2is\eta \sigma}\hat{w}(\eta)\E[\overline{\partial_\xi\hat{Z}(s,\xi-\sigma-\eta)}\hat{Z}(\xi-\sigma)]\hat{Z}(\xi-\eta)\Big\|_{L_{x,\omega}^2}   \\
         \lesssim \int_1^t\|xZ\|_{L_{x,\omega}^2}\|X\|_{L_x^\infty,L_\omega^2}.
    \end{array}$$

    And we get, by adding the other terms of $\partial_\xi I_1$ and using the definition of $\mathcal{X}_T$

    $$\|\partial_\xi I_1\|_{L_{x,\omega}^2}\lesssim \int_1^t\|xZ\|_{L_{x,\omega}^2}\|X\|^2_{L_x^\infty,L_\omega^2}\lesssim \int_1^t   s^{\alpha-1}\|X\|_{\mathcal{X}_T}^3 ds \lesssim  t^{\alpha}\|X\|_{\mathcal{X}_T}^3.$$

    With slight modifications, we can use the same method for the $I_2$ part and we get 

    \begin{equation}\label{eq:H10}
        \|\partial_\xi \hat{C}\|_{L_{x,\omega}^2}\lesssim t^{\alpha}\|X\|_{\mathcal{X}_T}^3.
    \end{equation}
    
    \textbf{Estimation of the solution in $L_x^\infty,L_\omega^2$:}

    Using $\eqref{eq:duhamelZ}$ we have, by Minkowski inequality 

    $$\| \hat{Z}(t,\xi)\|_{L_\omega^2}\leq \| \hat{X}_*(\xi)\|_{L_\omega^2}+\int_1^t \| R(s,\xi) \|_{L_\omega^2}.$$

    And by lemma \ref{lemma:estimateR} one gets

    $$\| \hat{Z}(t,\xi)\|_{L_\omega^2}\lesssim \|X_0\|_{H^{0,1}}+t^{-\delta+3\alpha}\|X\|_{\mathcal{X}_T}^3.$$

    Finally we use lemma \ref{lemma:sch}, with $n=1,3\alpha<\delta$ and $\alpha<\beta<\frac{1}{4}$, to get 

    $$\|X\|_{L^\infty,L_\omega^2}\lesssim \frac{1}{t^{1/2}}\Big(\|X_0\|_{H^{0,1}}+t^{-\delta+3\alpha}\|X\|_{\mathcal{X}_T}^3\Big)+\frac{1}{t^{1/2+\beta}}\|f\|_{H^{0,1},L_\omega^2}.$$

    And we get \begin{equation}\label{eq:Linfty}
        \|X\|_{L^\infty,L_\omega^2}\lesssim \frac{1}{t^{1/2}}\Big(\|X_0\|_{H^{0,1}}+\|X\|_{\mathcal{X}_T}^3\Big)
    \end{equation}

    Combining the conservation of the $L^2$ norm with \eqref{eq:H01}, \eqref{eq:H10} and \eqref{eq:Linfty} gives \eqref{ineq:boostrap-estimate} and the proof of Proposition \ref{prop:bootstrap-estimate} is complete.

\end{proof}

\section{Proof of Theorem \ref{th:principal}}\label{Section:proofofth}

All the elements are set to prove Theorem \ref{th:principal}. First we prove that the local solution of Theorem \ref{th:local-existence} is global, and then we show that it satisfies the scattering result \eqref{ineq:scattering}.

\begin{proposition}\label{prop:globalsol}
    The local solution $X$ provided by Theorem \ref{th:local-existence} is global in $C(\R,H^{1,0},L_\omega^2\cap H^{0,1},L_\omega^2)$ and verifies the following estimate: 

    \begin{equation}\label{ineq:decreasingestimate}
        \|X(t)\|_{L_x^\infty,L_\omega^2}\lesssim \varepsilon(1+ t)^{-1/2}.
    \end{equation}
\end{proposition}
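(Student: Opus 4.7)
The plan is to combine the trilinear estimate of Proposition \ref{prop:bootstrap-estimate} with a standard continuity (bootstrap) argument to upgrade the local solution of Theorem \ref{th:local-existence} into a global one satisfying a uniform-in-$T$ bound on $\|X\|_{\mathcal{X}_T}$; the decay \eqref{ineq:decreasingestimate} is then read off directly from the first summand in the definition of the $\mathcal{X}_T$ norm.

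Let $T^\star\in(1,+\infty]$ denote the maximal time up to which the local solution extends in $\mathcal{C}([1,T],H^{1,0},L^2_\omega\cap H^{0,1},L^2_\omega)$. Since $X$ is continuous in time with values in $H^{1,0}\cap H^{0,1}$, the map $M\colon T\mapsto\|X\|_{\mathcal{X}_T}$ is continuous and nondecreasing on $[1,T^\star)$, with $M(1)\leq C_0\varepsilon$ by Theorem \ref{th:local-existence}. Under the bootstrap assumption $M(T)\leq 2C_0\varepsilon$, Proposition \ref{prop:bootstrap-estimate} yields
$$M(T)\leq \varepsilon+C_1(2C_0\varepsilon)^3=\varepsilon\bigl(1+8C_0^3C_1\varepsilon^2\bigr).$$
Choosing $\varepsilon$ small enough that $1+8C_0^3C_1\varepsilon^2<2C_0$, we obtain the strictly improved bound $M(T)<2C_0\varepsilon$. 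A standard connectedness argument based on the continuity of $M$ then yields $M(T)\leq 2C_0\varepsilon$ for all $T\in[1,T^\star)$.

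This uniform bound controls in particular $\sup_{1\leq t\leq T}\|X(t)\|_{H^{1,0},L^2_\omega\cap H^{0,1},L^2_\omega}$ by a quantity $\lesssim \varepsilon(1+T^\alpha)$, which is finite for any finite $T$. The blow-up alternative attached to the fixed-point argument underlying Theorem \ref{th:local-existence} therefore forbids $T^\star<+\infty$, so the solution is global on $[1,+\infty)$; its extension to negative times follows from the time reversibility of \eqref{Cauchyprob} recalled after Theorem \ref{Cor:scatteringoper}. The pointwise decay \eqref{ineq:decreasingestimate} is then immediate: for $t\geq 1$, the first summand of $\|\cdot\|_{\mathcal{X}_T}$ gives $\|X(t)\|_{L^\infty_x,L^2_\omega}\leq 2C_0\varepsilon\,t^{-1/2}\lesssim \varepsilon(1+t)^{-1/2}$, and for the bounded range $|t|\leq 1$ one invokes the one-dimensional Sobolev embedding $H^1(\R)\hookrightarrow L^\infty(\R)$ together with the uniformly bounded $\|X(t)\|_{H^{1,0},L^2_\omega}$.

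The only delicate point is the continuity in $T$ of each of the four $L^\infty_T$-based summands composing $\|\cdot\|_{\mathcal{X}_T}$, which is needed to close the bootstrap. This continuity ultimately reduces to the fact that Theorem \ref{th:local-existence} produces a solution that is continuous in time with values in $H^{1,0}\cap H^{0,1}$, hence (via Sobolev) in $L^\infty_x,L^2_\omega$, and clearly in $\dot H^{1,0}$ and $\dot H^{0,1}$. All the genuine analytic work has been front-loaded into Proposition \ref{prop:bootstrap-estimate}, so once this qualitative continuity is noted the remainder of the proof is purely mechanical.
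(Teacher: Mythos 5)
Your proposal is correct and follows essentially the same route as the paper: a continuity/bootstrap argument that feeds the trilinear estimate of Proposition \ref{prop:bootstrap-estimate} back into a smallness assumption to obtain a strictly improved bound, then continues the solution via the local theory. You are in fact slightly more explicit than the paper, which phrases the bootstrap as a contradiction on a maximal time $T^*$ and leaves both the blow-up alternative and the derivation of \eqref{ineq:decreasingestimate} from the first summand of the $\mathcal{X}_T$ norm implicit.
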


\begin{proof} Let $T> 1$. We set $\epsilon>0$ sufficiently small such that we can apply Theorem \ref{th:local-existence} and $C_0$ the constant associated to $T$. 

    We can suppose that $C_0>1$ and we define $0<\varepsilon'<\varepsilon$ such that: $\varepsilon'+C_0^3C_1\varepsilon'^3\leq \frac{C+1}{2}\varepsilon'$, where we recall that $C_1$ is the constant independent of $T$ in Proposition \ref{prop:bootstrap-estimate}.

    Let $X$ be the local solution of \eqref{Cauchyprob} given by Theorem \ref{th:local-existence}, with $\|X_0\|_{H^{1,0},L_\omega^2\cap H^{0,1},L_\omega^2}\leq \varepsilon'$. 
    
    We show that $T^*:=\text{sup}\{T;\| X\|_{L_{T^*}^\infty,(H^{1,0},L_\omega^2\cap H^{0,1},L_\omega^2)}\leq C\varepsilon'\}=\infty$ by contradiction. Firstly, by Theorem \ref{th:local-existence} and by definition of $X$, $T^*>1$. Let us suppose by contradiction that $T^*<\infty$, then by Proposition \ref{prop:bootstrap-estimate} 

    $$\| X\|_{L_{T^*}^\infty,(H^{1,0},L_\omega^2\cap H^{0,1},L_\omega^2)}\leq \varepsilon'+C_1\| X\|_{\mathcal{X}_{T^*}}^3\leq  \varepsilon'+C^3C_1\varepsilon'^3\leq \frac{C+1}{2}\varepsilon'<C\varepsilon',$$

    which contradicts the definition of $T^*$, by continuity of the solution in $H^{1,0},L_\omega^2\cap H^{0,1},L_\omega^2$. Thus $T^*=\infty$ and the solution is global. 
\end{proof}

We also prove another lemma giving an estimate similar than \eqref{eq:estimateforR} but for the $L^2$ norm in space.

\begin{lemma}\label{lemma:estimateRL2}
    For any $X\in \mathcal{X}_T$, for $s\geq 1$, we have: 

    \begin{equation}\label{ineq:estimateRL2}
        \| R(s,\xi)\|_{H^{\theta,0}_\xi,L_\omega^2\cap H^{0,\theta}_\xi,L_\omega^2}\lesssim (s^{-1-\delta+3\alpha}+s^{-1-\delta'})\|X\|_{\mathcal{X}_T}^3,
    \end{equation}

    for any $0<\delta<\frac{1}{2}$ and for $\delta'<0$.
\end{lemma}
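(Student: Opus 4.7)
The intersection norm $H^{\theta,0}_\xi L_\omega^2 \cap H^{0,\theta}_\xi L_\omega^2$ consists of a Sobolev-in-$\xi$ part and a $\langle\xi\rangle^\theta$-weighted part; we treat them separately, and each will produce one of the two terms in \eqref{ineq:estimateRL2}. The global strategy is to exploit the Plancherel theorem in $\xi$, which converts each of the two $\xi$-norms into a physical-space quantity amenable to the $\mathcal{X}_T$ bounds already available.

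\textbf{The $H^{0,\theta}_\xi$ part (producing $s^{-1-\delta'}$).} Differentiating $Z=e^{-is\Delta}X$ in time and using \eqref{Cauchyprob} gives $\partial_s Z=-ie^{-is\Delta}N(X)$, where $N(X)=(w*\E[|X|^2])X-\int w(x-y)\E[\overline{X(y)}X(x)]X(y)\,dy$. Comparing with \eqref{eq:duhamelZ} one obtains $R(s,\xi)=-ie^{is\xi^2}\widehat{N(X)(s)}(\xi)$, and Plancherel yields
\[
\|\langle\xi\rangle^\theta R(s,\cdot)\|_{L^2_\xi L^2_\omega} = \|\langle\nabla\rangle^\theta N(X)(s)\|_{L^2_x L^2_\omega}.
\]
Since $w\in M^1(\R)$, Young's inequality controls the convolution and the cubic structure of $N(X)$ gives, via Hölder,
\[
\|\langle\nabla\rangle^\theta N(X)\|_{L^2_x L^2_\omega}\lesssim \|w\|_{M^1}\|X\|_{L^\infty_x,L^2_\omega}^2\|\langle\nabla\rangle^\theta X\|_{L^2_x L^2_\omega}.
\]
Using $\|X\|_{L^\infty_x,L^2_\omega}\lesssim s^{-1/2}\|X\|_{\mathcal{X}_T}$ and the interpolation bound $\|\langle\nabla\rangle^\theta X\|_{L^2 L^2_\omega}\lesssim \|X\|_{L^2 L^2_\omega}^{1-\theta}\|\nabla X\|_{L^2 L^2_\omega}^\theta\lesssim s^{\alpha\theta}\|X\|_{\mathcal{X}_T}$ produces a bound of the form $s^{-1+\alpha\theta}\|X\|^3_{\mathcal{X}_T}$, which is the $s^{-1-\delta'}$ term with $\delta'=-\alpha\theta<0$.

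\textbf{The $H^{\theta,0}_\xi$ part (producing $s^{-1-\delta+3\alpha}$).} Here we keep the stationary-phase representation. Starting from \eqref{eq:defR} and the Fourier-inversion identities \eqref{eq:FourF1}–\eqref{eq:FourF2}, the change of variables $v=x+y-x'$ (resp.\ $v=x+y-y'$) recasts $R$ as $R(s,\xi)=-i\int dv\,e^{i\xi v}\Psi(s,v)$, where $\Psi$ is the convolution in $v$ with $w$ (the factor $\hat{w}(\eta)$) of an integral of the form
\[
\int dxdy\,\frac{e^{ixy/(2s)}-1}{2s}\Bigl(\E[Z(y-v)\overline{Z}(x+y-v)]Z(x-v)-\E[Z(y-v)\overline{Z}(x-v)]Z(x+y-v)\Bigr).
\]
Plancherel in $\xi$ identifies $\|\langle\nabla_\xi\rangle^\theta R\|_{L^2_\xi L^2_\omega}$ with $\|\langle v\rangle^\theta\Psi\|_{L^2_v L^2_\omega}$ (up to a constant), and Young absorbs the $w$-convolution at the cost of $\|w\|_{M^1}$. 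Using the cancellation $|e^{ixy/(2s)}-1|\lesssim |xy/(2s)|^\delta$ and Minkowski, we arrive, as in the proof of Lemma \ref{lemma:estimateR}, at an integrand $s^{-1-\delta}|x|^\delta|y|^\delta$ multiplied by a product of $L^2_\omega$-norms of translates of $Z$. The weights $|x|^\delta$, $|y|^\delta$ and $\langle v\rangle^\theta$ are then transferred onto the $Z$-factors through triangle-type splittings $|x|^\delta\leq|x-v|^\delta+|v|^\delta$, $\langle v\rangle^\theta\lesssim \langle x-v\rangle^\theta+\langle x\rangle^\theta$ and similarly in $y$. Each resulting term reduces to $L^1_x$ and $L^2_v$ norms of the form $\|\langle x\rangle^{2\delta+\theta}Z\|_{L^1_xL^2_\omega}$, controlled by Cauchy-Schwarz in $x$ (using $\int\langle x\rangle^{-2\alpha}dx<\infty$ for appropriate $\alpha>1/2$) together with interpolation between $\|Z\|_{L^2_xL^2_\omega}\lesssim\|X\|_{\mathcal{X}_T}$ and $\|\langle x\rangle Z\|_{L^2_xL^2_\omega}\lesssim s^\alpha\|X\|_{\mathcal{X}_T}$. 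The total exponent of $s$ from the three $Z$-factors is at most $3\alpha$, yielding the desired $s^{-1-\delta+3\alpha}\|X\|^3_{\mathcal{X}_T}$ bound.

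\textbf{Main obstacle.} The hardest point is the second part: one must split the weights $|x|^\delta$, $|y|^\delta$, $\langle v\rangle^\theta$ across the three translated factors $Z(x-v),Z(y-v),Z(x+y-v)$ in a way that every weight is absorbed by a $\mathcal{X}_T$-controlled quantity while keeping every remaining spatial integral convergent, and that the resulting power of $s$ does not exceed $s^{3\alpha}$. The conditions $0<\delta<\tfrac12$ and $\theta<1$ ensure that the required interpolation exponents between $\|Z\|_{L^2 L^2_\omega}$ and $\|\langle x\rangle Z\|_{L^2 L^2_\omega}$ stay admissible, so the bookkeeping closes.
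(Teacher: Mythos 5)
Your treatment of the $H^{\theta,0}_\xi$ component (the $\langle\nabla_\xi\rangle^\theta$ part) is essentially the paper's argument: Plancherel in $\xi$ turns the fractional derivative into a weight $\langle x'\rangle^\theta$ on the physical-side representation \eqref{eq:FourF1}--\eqref{eq:FourF2}, the oscillatory factor is estimated by $|e^{ixy/2s}-1|\lesssim (|xy|/s)^\delta$, and the weights are distributed over the three translates of $Z$; the paper lands on $s^{-1-\delta}\|\langle x\rangle^\delta Z\|_{L^1_xL^2_\omega}^2\|\langle x\rangle^{2\delta+\theta}Z\|_{L^2_xL^2_\omega}$ under the constraint $2\delta+\theta\leq 1$, which is the bookkeeping you describe. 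Your identity $R(s,\xi)=-ie^{is\xi^2}\widehat{N(X)(s)}(\xi)$ is also correct and is implicitly what the paper uses.

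There is, however, a genuine gap in your $H^{0,\theta}_\xi$ part. Your estimate yields $\|\langle\xi\rangle^\theta R\|_{L^2_\xi L^2_\omega}\lesssim s^{-1+\alpha\theta}\|X\|_{\mathcal X_T}^3$, i.e.\ $\delta'=-\alpha\theta<0$. Although this literally matches the sign printed in the statement, that sign is a typo: the paper's own proof concludes with $\delta'=(1-\theta)/4-\alpha(3-2\theta)>0$ (this is exactly why $\alpha$ was required to be smaller than $\frac{1-\theta}{4(3-2\theta)}$ in Section \ref{sec:ideas}), and the lemma is used in the proof of Theorem \ref{th:principal} to show that $\int_s^t\|R(\tau,\cdot)\|\,d\tau\to 0$ as $s,t\to\infty$, which forces $s^{-1-\delta'}$ to be integrable. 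A bound of the form $s^{-1+\alpha\theta}$ is not integrable and the Cauchy/scattering argument collapses. The missing idea is that one should not estimate the $\langle\xi\rangle^\theta$-weighted norm directly by a fractional Leibniz rule (which forfeits the stationary-phase gain); instead one interpolates between the non-decaying endpoint $\|\langle\xi\rangle R(s,\cdot)\|_{L^2_\xi L^2_\omega}\lesssim s^{\alpha-1}\|X\|_{\mathcal X_T}^3$ (your computation at $\theta=1$, which is the paper's \eqref{H0thinterp1}) and the decaying $L^2_\xi$ bound $\|R(s,\cdot)\|_{L^2_\xi L^2_\omega}\lesssim s^{-5/4+3\alpha}\|X\|_{\mathcal X_T}^3$ obtained from the $H^{\theta,0}_\xi$ analysis at $\theta=0$, $\delta=1/4$. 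This gives $s^{-1-(1-\theta)/4+\alpha(3-2\theta)}$, which decays strictly faster than $s^{-1}$ for every $\theta<1$. You should add this interpolation step; without it the lemma does not serve its purpose downstream.
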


\begin{proof} \textbf{Estimation of the $H^{\theta,0}_\xi,L_\omega^2$ norm}
    Using the expressions \eqref{eq:FourF1} and \eqref{eq:FourF2} we have 

    \begin{equation}\label{eq:F_1L2}
        \mathcal{F}^{-1}_{\eta,\sigma}(F_1)(x,y)=2\pi e^{i\xi(x+y)}\mathcal{F}_{x'}\Big(\E[Z(x'-x)\overline{Z}(x')]Z(x'-y)\Big)(\xi),
    \end{equation}

    and

    \begin{equation}\label{eq:F_2L2}
        \mathcal{F}^{-1}_{\eta,\sigma}(F_2)(x,y)=2\pi e^{i\xi(x+y)}\mathcal{F}_{x'}\Big(\E[Z(x'-y)\overline{Z}(x')]Z(x'-x)\Big)(\xi).
    \end{equation}

    Then, by expression \eqref{eq:defR} we have$$\|R(s,\xi)\|_{H^{\theta,0}_\xi,L_\omega^2}\lesssim s^{-1-\delta}\big\|\langle \nabla\rangle^\theta\int dxdy\ \lvert x\rvert^\delta \lvert y \rvert^\delta \mathcal{F}_{x'}\Big(\E[Z(x'-x)\overline{Z}(x')]Z(x'-y)\Big)(\xi)\big\|_{L_\xi^2,L_\omega^2}.$$

    Thus by Plancherel's formula $$\|R(s,\xi)\|_{H^{\theta,0}_\xi,L_\omega^2}\lesssim s^{-1-\delta}\big\|\int dxdy\ \lvert x\rvert^\delta \lvert y \rvert^\delta \E[Z(x'-x)\langle x'\rangle^\theta\overline{Z}(x')]Z(x'-y)\big\|_{L_{x'}^2,L_\omega^2}.$$

    Using the fact that $$\lvert x \rvert^\delta\lvert y\rvert^\delta\lesssim \Big( \lvert x'-x\rvert^\delta+\lvert x' \rvert^\delta\Big)\Big( \lvert x'-y \rvert^\delta+\lvert x' \rvert^\delta\Big),$$

    we get $$\|R(s,\xi)\|_{H^{\theta,0}_\xi,L_\omega^2}\lesssim s^{-1-\delta} \|\langle x\rangle^{\delta}Z\|_{L_x^1,L_\omega^2}^2\|\langle x\rangle^{2\delta+\theta}Z\|_{L_x^2,L_\omega^2}. $$

    And if $0<\delta<\frac{1}{2}$ and $2\delta+\theta\leq 1$ we obtain

    \begin{equation}\label{eq:Hthet0}
        \|R(s,\xi)\|_{H^{\theta,0},L_\omega^2}\lesssim s^{-1-\delta+3\alpha}\|X\|_{\mathcal{X}_T}^3.
    \end{equation}

    \textbf{Estimation of the $H^{0,\theta}_\xi,L_\omega^2$ norm}

    We recall that by definition
    $$\Hat{C}(X,X,X)(t,\xi)=\int_1^t ds\ R(s,\xi). $$

    And by using the same method than in the proof of proposition \ref{prop:bootstrap-estimate} to obtain \eqref{eq:H011} and \eqref{eq:H012}, we claim that we have

    \begin{equation}\label{H0thinterp1}
        \|\langle \xi\rangle R(s,\xi)\|_{L_\xi^2,L_\omega^2}\lesssim s^{\alpha-1}\|X\|_{\mathcal{X}_T}^3.
    \end{equation}
    
    And, the estimation \eqref{eq:Hthet0} for $\theta=0$ and $\delta=\frac{1}{4}$ gives \begin{equation}\label{H0thinterp2}
        \|R(s,\xi)\|_{L_\xi^2,L_\omega^2}\lesssim s^{-1-\frac{1}{4}+3\alpha}\|X\|_{\mathcal{X}_T}^3.
    \end{equation}

    Interpolating \eqref{H0thinterp1} and \eqref{H0thinterp2} gives 

    \begin{equation}\label{H0interp}
        \|R(s,\xi)\|_{H^{0,\theta}_\xi,L_\omega^2}\lesssim s^{-1-(1-\theta)/4+\alpha(3-2\theta) }\|X\|_{\mathcal{X}_T}^3.
    \end{equation}

    Combining \eqref{eq:Hthet0} and \eqref{H0interp} gives the result, with $\delta'=(1-\theta)/4-\alpha(3-2\theta)>0$ by definition of $\alpha$.
\end{proof}

We now can conclude with the proof of Theorem \ref{th:principal}.

\begin{proof}[Proof of Theorem \ref{th:principal}]
    Equality \eqref{eq:duhamelZ} gives that for any $t>s>1$: $$\|\hat{Z}(t,\xi)-\hat{Z}(s,\xi)\|_{H^{\theta,0}_\xi,L_\omega^2\cap H^{0,\theta}_\xi,L_\omega^2\cap L_\xi^\infty,L_\omega^2 }\leq \int_s^t \|R(\tau,\xi)\|_{H^{\theta,0}_\xi,L_\omega^2\cap H^{0,\theta}_\xi,L_\omega^2\cap L_\xi^\infty,L_\omega^2}d\tau.$$

    Then using Lemma \ref{lemma:estimateR} and Lemma \ref{lemma:estimateRL2} gives the existence of $\delta> 0$ such that 

    \begin{equation}\label{ineq:Cauchy}
        \|\hat{Z}(t,\xi)-\hat{Z}(s,\xi)\|_{H^{\theta,0}_\xi,L_\omega^2\cap H^{0,\theta}_\xi,L_\omega^2\cap L_\xi^\infty,L_\omega^2}\lesssim t^{-\delta}-s^{-\delta}.
    \end{equation}

    Thus there exists $W\in H^{\theta,0}_\xi,L_\omega^2\cap H^{0,\theta}_\xi,L_\omega^2\cap L_\xi^\infty,L_\omega^2$ such that : $$W:=\underset{t\to \infty}{\text{lim}}\hat{Z}(t) \text{ in } L_\xi^2,L_\omega^2\cap L_\xi^\infty,L_\omega^2. $$

    To conclude we let $s\to \infty$ in \eqref{ineq:Cauchy}, which gives \eqref{ineq:scattering}, and the proof of Theorem \ref{th:principal} is complete. 
\end{proof}

\section{Proof of scattering for the density operators framework}\label{Sec:proofthop}

In this part we give the proof of Theorem \ref{Cor:scatteringoper}, making the link between the density operators and random fields. 

\begin{proof}[Proof of Theorem \ref{Cor:scatteringoper}]

$\gamma_0$ is of Trace class, so there exists a family of non-negative real $(\alpha_n)_{n\in\N}$ and an orthonormal family of $L^2(\R)$ $(e_n)_{n\in\N}$ such that
$$\gamma_0=\sum_{n\in\N}\alpha_n|e_n\rangle\langle e_n|.$$

We set $(g_n)_{n\in\N}$ a family of complex centered normalised independent Gaussian variable, and we define $$X_0=\sum_{n\in\N}\sqrt{\alpha_n}g_ne_n.$$

We note that $\E[\overline{g_n}g_m]=0$ if $n\neq m$ and $1$ otherwise. 

Then, for $v\in L^2(\R)$ $$\E[\langle v,X_0\rangle X_0]=\sum_{n,m} \sqrt{\alpha_n\alpha_m}\langle v,e_n\rangle e_m \E[\overline{g_n}g_m]= \sum_{n} \alpha_n\langle v,e_n\rangle e_n=\gamma_0(v),$$

thus: $\E[|X_0\rangle \langle X_0|]=\gamma_0$.

Moreover, we have that: $$\|X_0\|_{L^2(\Omega,H^{0,1}\cap H^{1,0}}^2=\E[\langle X_0,\langle \nabla \rangle X_0\rangle]+\E[\langle X_0,\langle x \rangle X_0\rangle],$$

and because $Tr(AB)=Tr(BA)$
$$\|X_0\|_{L^2(\Omega,H^{0,1}\cap H^{1,0}}^2=\E[\text{Tr}(|X_0\rangle\langle X_0|\langle \nabla \rangle)]+\E[\text{Tr}(|X_0\rangle\langle X_0|\langle x\rangle)],$$

then: $$\|X_0\|_{L^2(\Omega,H^{0,1}\cap H^{1,0}}^2=\text{Tr}(\E[|X_0\rangle\langle X_0|]\langle \nabla \rangle)+\text{Tr}(\E[|X_0\rangle\langle X_0|]\langle x\rangle)=\text{Tr}(\langle \nabla \rangle \gamma_0)+\text{Tr}(\langle x \rangle \gamma_0)\leq \varepsilon.$$

Then applying Theorem \ref{th:principal}, we obtain a solution $X\in \mathcal C(\R,H^{1,0},L_\omega^2\cap H^{0,1},L_\omega^2)$ of \eqref{Cauchyprob} with initial condition $X(t=1)=e^{i\Delta}X_0$ and $W\in L_\xi^2,L_\omega^2$ such that for $t\geq 1$

\begin{equation}
    \|\hat{X}-e^{-it\xi^2}\Hat W\|_{L_\xi^2,L_\omega^2}\lesssim t^{-\delta}.
\end{equation}

We now claim that the solution for \eqref{Cauchyprob-density-matrices} we search is \begin{equation}
    \gamma:=\E[|X\rangle\langle X|]:=u\mapsto \big(x\mapsto \E(X(x)\langle X,u\rangle_{L_x^2})\big).
\end{equation} 

$\gamma$ is a solution of \eqref{Cauchyprob-density-matrices} with initial condition $\gamma_0$, by definition of $X$ and $X_0$. We now show that it scatters at infinity, and we introduce the operator $$\gamma_\infty:=\E[|W\rangle\langle W|].$$

First we prove that the application $(Z_1,Z_2)\mapsto \E[|Z_1\rangle \langle Z_2|]$ is continuous from $(L_{x,\omega}^2)^2$ to $\mathfrak S^1$.

We have by Fubini and because the operator $\E[|Z_1\rangle \langle Z_2|]$ is of integral kernel $\E[Z_1(x)\overline{Z_2}(y)]$ 

$$\big\|\E[|Z_1\rangle \langle Z_2|]\big\|_{\mathfrak S^1}=\int_{\R} \big|\E[Z_1(x)\overline{Z_2}(x)]\big|dx.$$

So, by Cauchy-Schwarz inequality

$$\big\|\E[|Z_1\rangle \langle Z_2|]\big\|_{\mathfrak S^1}\leq \int_{\R} \|Z_1(x)\|_{L_\omega^2}\|Z_2(x)\|_{L_\omega^2}dx\leq \|Z_1\|_{L_{x,\omega}^2}\|Z_2\|_{L_{x,\omega}^2} .$$

We then compute that

$$\gamma-\gamma_\infty=\E[|S(t)X(t)-W\rangle\langle S(t)X(t)|]+\E[|W\rangle \langle W-S(t)X(t)|].$$

Combining the continuity of the application $(Z_1,Z_2)\mapsto \E[|Z_1\rangle \langle Z_2|]$ from $(L_{x,\omega}^2)^2$ to $\mathfrak S^2$, with the $L^2$ part of the inequality \eqref{ineq:scattering} and the fact that $X\in L_t^\infty,L_{x,\omega}^2$ gives the scattering result \eqref{ineq:scatteringopdens}.

For the result of global well-posedness, we mention the work \cite{ASdesuzzonglobwp} (section 6). We can use a similar method to prove that, on $\Sigma$ the set of non negative operators $\gamma$ such that $\text{Tr}(\langle \nabla \rangle \gamma)+\text{Tr}(\langle x \rangle \gamma)$ is finite, we have a unique solution of \eqref{Cauchyprob-density-matrices} with initial condition $\gamma_0$. Indeed, we can introduce a distance $d$ in this space defined by: $$d(\gamma_1,\gamma_2)=\underset{X_i\sim \gamma_i}{inf}\|X_1-X_2\|_{L^2(\Omega,H^{1,0}\cap H^{0,1})},$$where $\sim$ stands for "is a Gaussian random field of covariance". And if we have $\gamma$ a solution of \eqref{Cauchyprob-density-matrices}, we can find a solution $X\in \mathcal{C}(\R,L^2(\Omega,H^{1,0}\cap H^{0,1})$ of \eqref{Cauchyprob}, and of covariance $\gamma$. Then, we obtain the unicity of the solution of \eqref{Cauchyprob-density-matrices} by unicity of the solution of \eqref{Cauchyprob}, because if $X_i\sim \gamma_i$ then we have: 
$d(\gamma_1,\gamma_2)\leq \|X_1-X_2\|_{L^2(\Omega,H^{1,0}\cap H^{0,1})}.$
    
\end{proof}

\newpage

\appendix 

\section{Sketch of proof of Theorem \ref{th:local-existence}}\label{appendix:locex}

Solving the Cauchy problem \eqref{Cauchyprob} with initial condition $X_0$, is equivalent to solving the fixed point problem 

\begin{equation}
    X(t,x)=X_0+e^{it\Delta}\Tilde{C}(X,X,X),
\end{equation}

with $$\Tilde{C}(X,X,X)=-i\int_0^te^{-is\Delta}\Big( \big(w*\E[\lvert X(s)\rvert^2]\big)X(s)-\int w(x-y)\E[\overline{X(s,y)}X(s,x)]X(s,y)dy \Big)ds.$$

We just estimate the first term in $\Tilde{C}$, the other can be dealt the same way. We first estimate $\partial_x w*\E[\lvert X(s)\rvert^2]\big) X(s)$ in $L_2$. For this we compute, by similar methods as in Section \ref{Section:proofofprop}, the following (the other terms can be treated the same way) 

$$\mathcal{F}\big(w*\E[\lvert X(s)\rvert^2]\big)\partial_x X(s)\big)(\xi)=\int d\eta d\sigma\ \hat{w}(\eta)\E\big[\overline{\hat{X}(s,\sigma-\eta)}\hat{X}(s,\sigma)\big](\xi-\eta)\hat{X}(s,\xi-\eta).$$

We get, integrating with respect to $\xi$, then to $\eta$ and finally to $\sigma$ we get

$$\|\big(w*\E[\lvert X(s)\rvert^2]\big)\partial_x X(s)\big)\|_{L_x^2,L_\omega^2}\lesssim \|\hat{X}\|_{L_\xi^1,L_\omega^2}^2\|\xi\hat{X}\|_{L_\xi^2,L_\omega^2}^2.$$

Then we obtain, by Hölder inequality \begin{equation}
    \|\partial_x \big(w*\E[\lvert X(s)\rvert^2]\big)X(s)\big)\|_{L_x^2,L_\omega^2}\lesssim \|X\|_{H_x^{1,0},L_\omega^2}^3.
\end{equation}

We can conclude that 

 \begin{equation}
    \|\Tilde{C}\|_{H^{1,0}_x,L_\omega^2}\lesssim t \|X\|_{H_x^{1,0},L_\omega^2}^3.
\end{equation}

Now we estimate $\|x\Tilde{C}\|_{L_x^2,L_\omega^2}=\|\partial_\xi\mathcal{F}(\Tilde{C})\|_{L_\xi^2,L_\omega^2}$. Recalling formulas \eqref{eq:I_1}, \eqref{eq:I_2} and \eqref{eq:C} we just estimate $\partial_\xi I_1'$, where

$$I_1'=\frac{-i}{\sqrt{2\pi}}\int_0^tds \int d\eta d\sigma\ e^{2is\eta (\xi-\sigma)}\hat{w}(\eta)\E[\overline{\hat{Z}(s,\sigma-\eta)}\hat{Z}(\sigma)]\hat{Z}(\xi-\eta).$$

We have \begin{equation}
    \begin{array}{rcl}
         \partial_\xi I_1' & = & \frac{-i}{\sqrt{2\pi}}\int_0^tds \int d\eta d\sigma\ e^{2is\eta (\xi-\sigma)}\hat{w}(\eta)\E[ \overline{\hat{Z}(s,\sigma-\eta)}\hat{Z}(\sigma)]\partial_\xi\hat{Z}(\xi-\eta)  \\
         & & + 2i\frac{-i}{\sqrt{2\pi}}\int_0^tds \ s \int d\eta d\sigma\ \eta e^{2is\eta (\xi-\sigma)}\hat{w}(\eta)\E[ \overline{\hat{Z}(s,\sigma-\eta)}\hat{Z}(\sigma)]\hat{Z}(\xi-\eta)
    \end{array}
\end{equation}

For the first term we integrate over $\xi$ then over $\eta$ and then over $\sigma$ to obtain

$$\|\int d\eta d\sigma\ e^{2is\eta (\xi-\sigma)}\hat{w}(\eta)\E[ \overline{\hat{Z}(s,\sigma-\eta)}\hat{Z}(\sigma)]\partial_\xi\hat{Z}(\xi-\eta)\|_{L_\xi^2,L_\omega^2}\lesssim \|\hat{Z}\|_{L_\xi^1,L_\omega^2}^2\|\partial_\xi \hat{Z}\|_{L_\xi^2,L_\omega^2}. $$

Then by Hölder inequality 

$$\|\int d\eta d\sigma\ e^{2is\eta (\xi-\sigma)}\hat{w}(\eta)\E[ \overline{\hat{Z}(s,\sigma-\eta)}\hat{Z}(\sigma)]\partial_\xi\hat{Z}(\xi-\eta)\|_{L_\xi^2,L_\omega^2}\lesssim \| X\|_{H_x^{1,0},L_\omega^2}^2\| Z\|_{H_x^{0,1},L_\omega^2}$$

For the second term, we denote by $\mathcal{A}= \int d\eta d\sigma\ \eta e^{2is\eta (\xi-\sigma)}\hat{w}(\eta)\E[ \overline{\hat{Z}(s,\sigma-\eta)}\hat{Z}(\sigma)]\hat{Z}(\xi-\eta)$, we have

$$\begin{array}{rcl}
     \mathcal{A} & = & \int d\eta d\sigma\ e^{2is\eta (\xi-\sigma)}\hat{w}(\eta)\E[ \overline{\hat{Z}(s,\sigma-\eta)}\sigma\hat{Z}(\sigma)]\hat{Z}(\xi-\eta)\\
     & & -\int d\eta d\sigma\  e^{2is\eta (\xi-\sigma)}\hat{w}(\eta)\E[(\sigma-\eta) \overline{\hat{Z}(s,\sigma-\eta)}\hat{Z}(\sigma)]\hat{Z}(\xi-\eta)
\end{array}$$

By a change of variable we get that there is $C_1$ and $C_2$ depending on $\eta$, $\sigma$ and $\xi$ such that 

$$\begin{array}{rcl}
     \mathcal{A} & = & \int d\eta d\sigma\ e^{2isC_1}\hat{w}(\eta-\xi)\E[ \overline{\hat{Z}(s,\eta-\sigma)}(\xi-\sigma)\hat{Z}(\xi-\sigma)]\hat{Z}(\eta)\\
     & & -\int d\eta d\sigma\  e^{2isC_2}\hat{w}(\eta-\xi)\E[(\eta-\xi) \overline{\hat{Z}(s,\eta-\xi)}\hat{Z}(\sigma-\eta)]\hat{Z}(\eta)
\end{array},$$

thus reasoning as before gives that 

$$\| \partial_\xi I_1'\|_{L_\xi^2,L_\omega^2}\lesssim  t\| X\|_{H_x^{1,0},L_\omega^2}^2\| Z\|_{H_x^{0,1},L_\omega^2}+t^2\|X\|_{H_x^{1,0},L_\omega^2}^3.$$

So we have the estimate for $t\geq 1$

$$\|\Tilde{C}\|_{H_x^{0,1}\cap H_x^{1,0},L_\omega^2}\lesssim t\| X\|_{H_x^{1,0},L_\omega^2}^2\| Z\|_{H_x^{0,1},L_\omega^2}+t^2\|X\|_{H_x^{1,0},L_\omega^2}^3.$$

And using classical arguments to solve the fixed point problem in 

$$ \mathcal{X}'_T= \Big\{X : \|X\|_{\mathcal{X}'_T}:=\|X\|_{L_T^\infty,\Dot{H}_x^{1,0},L_\omega^2}+\|Z\|_{L_T^\infty,\Dot{H}_x^{0,1},L_\omega^2}+\| X\|_{L_T^\infty,L_x^2,L_\omega^2}<\infty \Big\}, $$ 

we claim that for $T\geq1$ there is a unique solution in $L^\infty\big([0,T],B_{\mathcal{X}'_T}(0,C\varepsilon)\big)$ for some constant $C>0$ and $\varepsilon>0$ small enough

To conclude that the solution is also in $L_T^\infty,L_x^\infty,L_\omega^2$, we just notice that the estimate we gave on $R(s,\xi)$ in order to control the $L_T^\infty,L_x^\infty,L_\omega^2$ norm, only depends on the $H_x^{0,1},L_\omega^2$ norm of $Z$.

\newpage

\bibliographystyle{amsplain}
\bibliography{biblio}

\providecommand{\bysame}{\leavevmode\hbox to3em{\hrulefill}\thinspace}
\providecommand{\MR}{\relax\ifhmode\unskip\space\fi MR }
\providecommand{\MRhref}[2]{%
  \href{http://www.ams.org/mathscinet-getitem?mr=#1}{#2}
}
\providecommand{\href}[2]{#2}
\begin{thebibliography}{10}

\bibitem{BBPPT}
Volker Bach, S{\'e}bastien Breteaux, S{\"o}ren Petrat, Peter Pickl, and Tim Tzaneteas, \emph{{Kinetic energy estimates for the accuracy of the time-dependent Hartree-Fock approximation with Coulomb interaction}}, Journal de Math{\'e}matiques Pures et Appliqu{\'e}es \textbf{105} (2016), no.~1, 1--30.

\bibitem{BardosDerivation}
Claude Bardos, Fran{\ifmmode\mbox{\c{c}}\else\c{c}\fi}ois Golse, Alex~D. Gottlieb, and Norbert~J. Mauser, \emph{{Mean field dynamics of fermions and the time-dependent Hartree{\textendash}Fock equation}}, J. Math. Pures Appl. \textbf{82} (2003), no.~6, 665--683.

\bibitem{Benedikter2016Dec}
Niels Benedikter, Vojkan Jak{\ifmmode\check{s}\else\v{s}\fi}i{\ifmmode\acute{c}\else\'{c}\fi}, Marcello Porta, Chiara Saffirio, and Benjamin Schlein, \emph{{Mean-Field Evolution of Fermionic Mixed States}}, Commun. Pure Appl. Math. \textbf{69} (2016), no.~12, 2250--2303.

\bibitem{Benedikter_2016}
Niels Benedikter, Marcello Porta, Chiara Saffirio, and Benjamin Schlein, \emph{{From the Hartree Dynamics to the Vlasov Equation}}, Archive for Rational Mechanics and Analysis \textbf{221} (2016), no.~1, 273--334.

\bibitem{benedikter2014mean}
Niels Benedikter, Marcello Porta, and Benjamin Schlein, \emph{Mean--field evolution of fermionic systems}, Communications in Mathematical Physics \textbf{331} (2014), 1087--1131.

\bibitem{BDF}
Antonio Bove, Giuseppe Da~Prato, and Guido Fano, \emph{{An existence proof for the Hartree-Fock time-dependent problem with bounded two-body interaction}}, Communications in mathematical physics \textbf{37} (1974), 183--191.

\bibitem{BDF2}
\bysame, \emph{{On the Hartree-Fock time-dependent problem}}, Communications in mathematical physics \textbf{49} (1976), 25--33.

\bibitem{CF}
Eric Canc{\`e}s and Gero Friesecke, \emph{Density functional theory: Modeling, mathematical analysis, computational methods, and applications}, Springer Nature, 2023.

\bibitem{cazenave1990cauchy}
Thierry Cazenave and Fred~B Weissler, \emph{{The Cauchy problem for the critical nonlinear Schr{\"o}dinger equation in Hs}}, Nonlinear Analysis: Theory, Methods \& Applications \textbf{14} (1990), no.~10, 807--836.

\bibitem{chadam1976time}
John~M Chadam, \emph{{The time-dependent Hartree-Fock equations with Coulomb two-body interaction}}, Communications in mathematical physics \textbf{46} (1976), 99--104.

\bibitem{CG}
John~M Chadam and Robert~T Glassey, \emph{{Global existence of solutions to the Cauchy problem for time-dependent Hartree equations}}, Journal of Mathematical Physics \textbf{16} (1975), no.~5, 1122--1130.

\bibitem{CHP}
Thomas Chen, Younghun Hong, and Nata{\v{s}}a Pavlovi{\'c}, \emph{{Global well-posedness of the NLS system for infinitely many fermions}}, Archive for rational mechanics and analysis \textbf{224} (2017), 91--123.

\bibitem{CHP2}
\bysame, \emph{On the scattering problem for infinitely many fermions in dimensions {$d\geq 3$} at positive temperature}, Annales de l'Institut Henri Poincar{\'e} C, Analyse non lin{\'e}aire, vol.~35, Elsevier, 2018, pp.~393--416.

\bibitem{collot2023stability}
Charles Collot, Elena Danesi, Anne-Sophie de~Suzzoni, and Cyril Malézé, \emph{{Stability of homogeneous equilibria of the Hartree-Fock equation, for its equivalent formulation for random fields}}, 2023.

\bibitem{CdS}
Charles Collot and A-S de~Suzzoni, \emph{{Stability of equilibria for a Hartree equation for random fields}}, Journal de Math{\'e}matiques Pures et Appliqu{\'e}es \textbf{137} (2020), 70--100.

\bibitem{CdS2}
Charles Collot and Anne-Sophie de~Suzzoni, \emph{{Stability of steady states for Hartree and Schr{\"o}dinger equations for infinitely many particles}}, Annales Henri Lebesgue \textbf{5} (2022), 429--490.

\bibitem{collot2023asymptotic}
Charles Collot and Pierre Germain, \emph{{Asymptotic Stability of Solitary Waves for One Dimensional Nonlinear Schr{\"o}dinger Equations}}, arXiv preprint arXiv:2306.03668 (2023).

\bibitem{ASdesuzzonglobwp}
Anne-Sophie de~Suzzoni, \emph{An equation on random variables and systems of fermions}, 2016.

\bibitem{Deift2011Jun}
Percy Deift and Jungwoon Park, \emph{{Long-Time Asymptotics for Solutions of the NLS Equation with a Delta Potential and Even Initial Data: Announcement of Results}}, Lett. Math. Phys. \textbf{96} (2011), no.~1, 143--156.

\bibitem{EESY}
Alexander Elgart, L{\'a}szl{\'o} Erd{\H{o}}s, Benjamin Schlein, and Horng-Tzer Yau, \emph{{Nonlinear Hartree equation as the mean field limit of weakly coupled fermions}}, Journal de math{\'e}matiques pures et appliqu{\'e}es \textbf{83} (2004), no.~10, 1241--1273.

\bibitem{FrohlichDerivationFermi}
J{\ifmmode\ddot{u}\else\"{u}\fi}rg Fr{\ifmmode\ddot{o}\else\"{o}\fi}hlich and Antti Knowles, \emph{{A Microscopic Derivation of the Time-Dependent Hartree-Fock Equation with Coulomb Two-Body Interaction}}, J. Stat. Phys. \textbf{145} (2011), no.~1, 23--50.

\bibitem{GIMS98}
Ingenuin Gassner, Reinhard Illner, Peter Markowich, and Christian Schmeiser, \emph{{Semiclassical, {$t\rightarrow \infty$} asymptotics and dispersive effects for Hartree-Fock systems}}, M2AN \textbf{32} (1998), no.~6, 699--713.

\bibitem{germain2009global}
Pierre Germain, Nader Masmoudi, and Jalal Shatah, \emph{{Global solutions for 3D quadratic Schr{\"o}dinger equations}}, International Mathematics Research Notices \textbf{2009} (2009), no.~3, 414--432.

\bibitem{germain2012global2}
\bysame, \emph{{Global solutions for 2D quadratic Schr{\"o}dinger equations}}, Journal de math{\'e}matiques pures et appliqu{\'e}es \textbf{97} (2012), no.~5, 505--543.

\bibitem{germain2012global}
\bysame, \emph{{Global solutions for the gravity water waves equation in dimension 3}}, Annals of Mathematics (2012), 691--754.

\bibitem{Ginibre1993Jan}
J.~Ginibre and T.~Ozawa, \emph{{Long range scattering for nonlinear Schr{\ifmmode\ddot{o}\else\"{o}\fi}dinger and Hartree equations in space dimension $n\geq 2$}}, Commun. Math. Phys. \textbf{151} (1993), no.~3, 619--645.

\bibitem{ginibre1979class}
Jean Ginibre and Giorgio Velo, \emph{{On a class of nonlinear Schr{\"o}dinger equations. I. The Cauchy problem, general case}}, Journal of Functional Analysis \textbf{32} (1979), no.~1, 1--32.

\bibitem{ginibre1980class}
\bysame, \emph{{On a class of non linear Schr{\"o}dinger equations with non local interaction}}, Mathematische Zeitschrift \textbf{170} (1980), 109--136.

\bibitem{gustafson2009scattering}
Stephen Gustafson, Kenji Nakanishi, and Tai-Peng Tsai, \emph{{Scattering theory for the Gross--Pitaevskii equation in three dimensions}}, Communications in Contemporary Mathematics \textbf{11} (2009), no.~04, 657--707.

\bibitem{H}
Sonae Hadama, \emph{{Asymptotic stability of a wide class of steady states for the Hartree equation for random fields}}, arXiv preprint arXiv:2303.02907 (2023).

\bibitem{NAKASTUD}
\bysame, \emph{{Asymptotic stability of a wide class of steady states for the Hartree equation for random fields}}, 2023.

\bibitem{HayashiNaumkin1998}
Hayashi and Naumkin, \emph{{Asymptotics for Large Time of Solutions to the Nonlinear Schr{\ifmmode\ddot{o}\else\"{o}\fi}dinger and Hartree Equations on JSTOR}}, April 1998, [Online; accessed 12. Feb. 2024], pp.~369--389.

\bibitem{Ifrim2015Jun}
Mihaela Ifrim and Daniel Tataru, \emph{{Global bounds for the cubic nonlinear Schr{\ifmmode\ddot{o}\else\"{o}\fi}dinger equation (NLS) in one space dimension}}, Nonlinearity \textbf{28} (2015), no.~8, 2661.

\bibitem{Ikeda2012Jan}
Masahiro Ikeda, \emph{{Modified scattering operator for the Hartree{\textendash}Fock equation}}, Nonlinear Anal. Theory Methods Appl. \textbf{75} (2012), no.~1, 211--225.

\bibitem{J}
Joseph~W Jerome, \emph{{Time dependent closed quantum systems: nonlinear Kohn--Sham potential operators and weak solutions}}, Journal of Mathematical Analysis and Applications \textbf{429} (2015), no.~2, 995--1006.

\bibitem{KatoPusateri2011}
Jun Kato and Fabio Pusateri, \emph{{A new proof of long-range scattering for critical nonlinear Schr{\ifmmode\ddot{o}\else\"{o}\fi}dinger equations}}, Differential Integral Equations \textbf{24} (2011), no.~9/10, 923--940.

\bibitem{LS2}
Mathieu Lewin and Julien Sabin, \emph{{The Hartree equation for infinitely many particles, II: Dispersion and scattering in 2D}}, Analysis \& PDE \textbf{7} (2014), no.~6, 1339--1363.

\bibitem{LS1}
\bysame, \emph{{The Hartree equation for infinitely many particles I. Well-posedness theory}}, Communications in Mathematical Physics \textbf{334} (2015), 117--170.

\bibitem{Lindblad2005Dec}
Hans Lindblad and Avy Soffer, \emph{{Scattering and small data completeness for the critical nonlinear Schr{\ifmmode\ddot{o}\else\"{o}\fi}dinger equation}}, Nonlinearity \textbf{19} (2005), no.~2, 345.

\bibitem{malézé2023scattering}
Cyril Malézé, \emph{{A scattering result around a non-localised equilibria for the quintic Hartree equation for random fields}}, 2023.

\bibitem{murphy2021review}
Jason Murphy, \emph{{A review of modified scattering for the 1$ d $ cubic NLS}},  \textbf{88} (2021), 119--146.

\bibitem{NY23}
Toan Nguyen and Chanjin You, \emph{{Plasmons for the Hartree equations with Coulomb interaction}}, arXiv preprint arXiv:2306.03800 (2023), 1--50.

\bibitem{Petrat2016Mar}
S{\ifmmode\ddot{o}\else\"{o}\fi}ren Petrat and Peter Pickl, \emph{{A New Method and a New Scaling for Deriving Fermionic Mean-Field Dynamics}}, Math. Phys. Anal. Geom. \textbf{19} (2016), no.~1, 1--51.

\bibitem{porta2017mean}
Marcello Porta, Simone Rademacher, Chiara Saffirio, and Benjamin Schlein, \emph{{Mean field evolution of fermions with Coulomb interaction}}, Journal of Statistical Physics \textbf{166} (2017), 1345--1364.

\bibitem{PS}
Fabio Pusateri and Israel~Michael Sigal, \emph{Long-time behaviour of time-dependent density functional theory}, Archive for Rational Mechanics and Analysis \textbf{241} (2021), no.~1, 447--473.

\bibitem{SCB}
Martin Sprengel, Gabriele Ciaramella, and Alfio Borz{\`\i}, \emph{{A theoretical investigation of time-dependent Kohn--Sham equations}}, SIAM Journal on Mathematical Analysis \textbf{49} (2017), no.~3, 1681--1704.

\bibitem{livreTao}
Terence Tao, \emph{Nonlinear dispersive equations: local and global analysis}, no. 106, American Mathematical Soc., 2006.

\bibitem{Wada2002Jan}
Takeshi Wada, \emph{{Long-range scattering for time-dependent Hartree{\textendash}Fock type equation}}, Nonlinear Anal. Theory Methods Appl. \textbf{48} (2002), no.~2, 175--190.

\bibitem{Z}
Sandro Zagatti, \emph{{The Cauchy problem for Hartree-Fock time-dependent equations}}, Annales de l'IHP Physique th{\'e}orique, vol.~56, 1992, pp.~357--374.

\end{thebibliography}

\end{document}